\newtheorem{thm}{Theorem}
\newtheorem{cor*}{Corollary}
\newtheorem{lem}{Lemma}
\newtheorem{prop}{Proposition}
\newtheorem{proposition}{Proposition}
\newtheorem{remark}{Remark}
\newtheorem{ex}{Example}[section]
\newtheorem{definition}{Definition}
\newtheorem{assumption}{Assumption}
\newtheorem{example}[thm]{Example}
\def\L{\mathscr{L}}
\def\dif{\:\mathrm{d}}
\def\0{\emptyset}
\def\dif{\:\mathrm{d}}
\def\L{\mathscr{L}}
\def\dif{\:\mathrm{d}}
\def\L{\mathscr{L}}
\title{Stability of the solution set of quasi-variational inequalities and optimal control \footnotetext{{This research was carried out in the framework of {\sc MATHEON} supported by the Einstein Foundation Berlin within the
ECMath projects OT1, SE5 and SE19 as well as the Excellence Cluster Math$^+$ Berlin within project AA4-3, EF3-5, the transition project OT6, and the {\sc MATHEON} project A-AP24. The authors further acknowledge the
support of the DFG through the DFG-SPP 1962: Priority Programme Non-smooth and Complementarity-based
Distributed Parameter Systems: Simulation and Hierarchical Optimization  within Projects 10 and 11. CNR was supported by the NSF grant DMS-2012391. The authors further thank the two anonymous reviewers that helped simplify and improve the results.}}}
\author{Amal Alphonse\thanks{Weierstrass Institute, Mohrenstrasse 39, 10117 Berlin, Germany ({\tt alphonse@wias-berlin.de},  {\tt hintermueller@wias-berlin.de})} \and Michael Hinterm\"{u}ller$^\dagger$\thanks{Department of Mathematics, Humboldt-University of Berlin, Unter den Linden 6, 10099 Berlin, Germany. ({\tt hint@math.hu-berlin.de}).} 
        \and Carlos N. Rautenberg\thanks{Department of Mathematical Sciences and the Center for Mathematics and Artificial Intelligence (CMAI), George Mason University, Fairfax, VA 22030, USA. ({\tt crautenb@gmu.edu})
}
    }
\begin{document}

\maketitle
\begin{abstract}
For a class of quasi-variational inequalities (QVIs) of obstacle-type the stability of its solution set and associated optimal control problems are considered. These optimal control problems are non-standard in the sense that they involve an objective with set-valued arguments. The approach to study the solution stability is based on perturbations of minimal and maximal elements of the solution set of the QVI with respect to {monotone} perturbations of the forcing term. It is shown that different assumptions are required for studying decreasing and increasing perturbations and  that the optimization problem of interest is well-posed.

\end{abstract}

%

\pagestyle{myheadings}
\thispagestyle{plain}

\section{Introduction}\label{section:intro}

Quasi-variational inequalities (QVIs) are powerful mathematical models for the description of complex physical phenomena. Such models arise in many scientific areas including superconductivity \cite{Rodrigues2000,MR1765540,MR2947539, MR2652615,MR3335194,Prigozhin,MR3648950,MR3119319,MR3023771}, continuum mechanics \cite{Friedman1982}, impulse control problems \cite{Bensoussan1974,MR673169,Bensoussan1982,Bensoussan1984}, growth of sandpiles \cite{MR3082292,MR3231973,MR3335194,Prigozhin1986,Prigozhin1994,Prigozhin1996,Prigozhin2012}, and the formation of networks of lakes and rivers \cite{MR3231973,Prigozhin1994,Prigozhin1996}, among others. 

In general, QVIs are nonlinear, nonconvex, and nonsmooth problems with non-unique (i.e., set-valued) solutions. In physical models like the growth of sandpiles or the determination of the magnetic field in superconductors, each of these solutions fulfills physical laws confirming that they are not artifacts of the mathematical formulation (compare the results in \cite{MR2652615,MR3082292,MR3231973,MR3335194,Prigozhin1986,MR3800749}). In some cases, like the QVI arising in impulse control problems, extremals of the solution set can be determined, in the sense that there exist minimal and maximal elements of the solution set which are related to the value functional \cite{Bensoussan1974}.

The mathematical treatment of QVIs entails several possible directions. In addition to the ``order'' approach followed in this paper, at least two more are worth mentioning. In some cases, the QVI can be expressed as a generalized equation, and hence a particular instance of a more general problem class; see, e.g., \cite{MR2571488,MR2997552} and also \cite{MR3238849,MR3307334,MR3299010}. In problems involving constraints on derivatives, special forms of constraint regularization that modify the original partial differential operator may be suitable, see  \cite{MR1765540, MR2861829,MR2979609,MR3213477}. For details on these and further approaches, we refer the reader to \cite{alphonse2019recent}.

Given the complexity of QVIs, their optimal control represents a task which is yet even more complex than the study of the QVI itself. Without any structural properties of the solution set, the treatment of the control problem appears very hard, if not impossible. However, properties such as the availability of extremal elements
provide useful starting points for the successful analysis of the control problem and characterizations of its solutions. For this purpose, the study of the stability of minimal and maximal elements of the solution set with respect to perturbations of the forcing term represents a fundamental analytical step for the subsequent study of the control problem. Concerning the latter considered in infinite dimensions, we note that the literature is rather scarce; we refer to \cite{MR2657939,MR1816158,MR1770033, MR3012911} for some of the very few contributions. Finite dimensional cases have been studied in \cite{MR2338444} and the references therein. On the other hand, the study of optimal control problems for variational inequalities (VIs) has been the subject of a number of recent studies; see, e.g., \cite{MR3033110,MR3232626,MR3484394} and the references therein. We note here that---to the best of our knowledge---the study of the stability of minimal and maximal solutions of QVIs and the optimal control thereof, with both being focus topics of this work, have not yet been treated in the literature. We further note that the stability of the solution set is also of relevance in identification problems involving QVIs; see \cite{MR3783774}.

The paper is organized as follows. In section \ref{section:ProblemFormulation} we introduce the optimal control problem associated to the QVI of interest, and we provide the mathematical foundation of the structure of spaces under consideration and their associated ordering. Additionally, in section \ref{examples} we study two classes of applications associated to impulse control problems and to QVIs arising as the coupling of VIs and nonlinear partial differential equations (PDEs), respectively. In section \ref{monotonicity}, we discuss the fundamental results due to Tartar that determine the existence of minimal and maximal solutions of the QVIs of interest. Abstract stability results from the operator theoretic point of view are the subject of section \ref{OperatorResults}, along with an example exploring limitations. In section \ref{monotonic}, we study minimal and maximal solutions under perturbations of the forcing term from below and from above. The paper ends in section \ref{control} which studies the well-posedness of the control problem for the QVI.

\subsection*{Notation}\label{section:Notation}

Throughout the paper we assume that $\Omega$ is an open subset of $\mathbb{R}^N$, and $L^p(\Omega)$ for $1\leq p\leq \infty$ denotes the usual Lebesgue space. For $\nu>0$, we define
\begin{equation*}
L_\nu^\infty(\Omega):=\{z\in L^\infty(\Omega): z(x)\geq \nu \:\:\text{ for almost all  (f.a.a.) }\:\: x\in \Omega\}.
\end{equation*}
Additionally, $H_0^1(\Omega)$ and $H^1(\Omega)$ denote  the usual Sobolev spaces; see \cite{MR2424078}.

For a Banach space $X$ we write $\|\cdot\|_X$ for a norm on $X$ and $X'$ for the topological dual of $X$ with $\langle\cdot,\cdot\rangle_{X',X}$ the associated duality pairing, respectively.
For a sequence $\{z_n\}_{n\in\mathbb{N}}$ in $X$ we denote its strong convergence to $z\in X$ by ``$z_n\to z$'' and weak convergence by ``$z_n\rightharpoonup z$''. Further, for two Banach spaces $X_1$ and $X_2$, we write  $\mathscr{L}(X_1,X_2)$ for the space of bounded linear operators from $X_1$ to $X_2$.

\section{A class of optimization problems with QVI constraints}\label{section:ProblemFormulation}

\subsection{Preliminaries}\label{sec:prelim}
Let $(V,H,V')$ be a Gelfand triple of Hilbert spaces, i.e., $V\hookrightarrow H \hookrightarrow V'$, where the embedding  $V\hookrightarrow H$ is dense and continuous, $H$ is identified with $H'$, and the embedding $H \hookrightarrow V'$ is dense and continuous as well (see \cite{MR0173945} and also, e.g., \cite{Denkowski2003-2}). Also, from now on we use $\langle\cdot,\cdot\rangle:=\langle\cdot,\cdot\rangle_{V',V}$ and $(\cdot,\cdot)$ for the inner product in $H$. 

Let further $H^+\subset H$ be a closed convex cone satisfying
$H^+=\{v\in H: (v,y)\geq 0 \text{ for all } y\in H^+\}$.
Note that $H^+$ defines the cone of non-negative elements inducing the vector ordering:
$x\leq y \text{ if and only if }y-x\in H^+$. 
Given $x\in H$, let $x^+$ denote the orthogonal projection of $x$ onto $H^+$, and {define $x^-:=x^+-x$}. Clearly, one has the decomposition $x=x^+-x^-\in H^+-H^+$ for every $x\in H$, and $(x^+,x^-)=0$. Further, the infimum and supremum of two elements $x,y\in H$ are defined as $\sup (x,y):=x+(y-x)^+$ and $\inf (x,y):=x-(x-y)^+$, respectively. The supremum of an arbitrary { completely ordered subset $R$} of $H$ that is bounded (in the order) above is also properly defined: { $R$ can be written as} $\{x_i\}_{i\in J}$, where $J$ is completely ordered, {  and it follows that} $\{x_i\}_{i\in J}$ is a generalized Cauchy sequence in $H$ (see \cite[Chapter 15, \S15.2, Proposition 1]{Aubin1979}); { its limit is the upper bound of the original set}. 
{Additionally, we have that} that norm convergence preserves order, i.e., if {$z_n\to z$ and $y_n\to y$ in $H$, then $z_n\leq y_n$ ($y_n-z_n\in H^+$) implies $z\leq y$, since $H^+$ is closed}. 
Also, we write $z_n\downarrow z$ in $H$ if $z_n\geq z_{n+1}$ for all $n\in \mathbb{N}$ and $z_n\to z$ in $H$, and analogously we consider $z_n\uparrow z$. Further, we have that if the sequence $\{z_n\}$ is non-increasing (non-decreasing) and bounded from below (above) in the sense of the order, then there exists $z\in H$ for which $z_n\downarrow z$ ($z_n\uparrow z$) in $H$. Now, concerning $V$ we assume that $y\in V$ implies $y^+\in V$, and that $(\cdot)^+:V\to V$ is a bounded operator, i.e., we have $M>0$ with $\|y^+\|_V\leq M\|y\|_V$ for all $y\in V$. {Note that this allows for $V=H^1(\Omega)$ but not $V=H^2(\Omega)$; see \cite[Theorem A.1, Chapter II, Appendix A]{Kinderlehrer}}

Given $x,y\in H$ such that $x\leq y$, we define the closed ``interval'' with $x$ and $y$ as its respective endpoints by $[x,y]:=\{z\in H: x\leq z \text{ and } z\leq y\}$. Furthermore, we write $[y,+\infty)$ and $(-\infty,y]$ instead of $\{z\in H: z\geq y\}$ and $\{z\in H:  z\leq y\}$, respectively.

Next we get more specific with respect to $V$ and $H$. In fact, both are assumed to be spaces of maps $h:\Omega\to \mathbb{R}$ over some open set $\Omega\subset\mathbb{R}^N$ with the following dense and continuous embedding: $L^\infty(\Omega)\hookrightarrow H$. Note that this implies that $L^\infty(\Omega)\hookrightarrow V'$, as well, { and  $L^\infty(\Omega)$ inherits the induced order on $H$}. Our prototypical example for this setting is $V:=H_0^1(\Omega)$ and $H:=L^2(\Omega)$ with $H^+:=L^2_+(\Omega)$, the closed convex cone of non-negative maps in $L^2(\Omega)$ with ``$v\leq w$'' for $v,w\in H$ iff $v(x)\leq w(x)$ almost everywhere (a.e.) on $\Omega$. Here, we have $v^+(x):=\max\{v(x),0\}$ 
for $x\in \Omega$.

Let $A:V\to V'$ be an operator (possibly nonlinear) that is 
\begin{itemize}
\item[-] homogenous of order one, i.e., $A(tu)=tA(u)$ for all $u\in V$, $t>0$;
\item[-] Lipschitz continuous, i.e., there exists $C>0$ such that
\begin{equation*}
\|A(u)-A(v)\|_{V'}\leq C\|u-v\|_V, \qquad \text{for all} \qquad u,v \in V;
\end{equation*}
\item[-] strongly monotone, i.e., there exists $c>0$ such that
\begin{equation*}
\langle A(u)-A(v),u-v\rangle \geq c\|u-v\|^2_V,  \qquad \text{for all} \qquad u,v \in V;
\end{equation*}
\item[-] T-monotone, i.e., 
\begin{equation*}
\langle A(u)-A(v),(u-v)^+\rangle \geq 0, \qquad \text{for all} \qquad u,v \in V,
\end{equation*}
where equality holds if and only if $(u-v)^+=0$. 
\end{itemize}
A well-known example for $A$ in the case $V=H_0^1(\Omega)$ (or $V=H^1(\Omega)$) and $H=L^2(\Omega)$ is given by the elliptic linear partial differential operator
\begin{equation}\label{eq:EllipticOperator}
    \langle Av,w\rangle=\sum_{i,j}\int_{\Omega}a_{ij}(x) \frac{\partial v}{\partial x_j}\frac{\partial w}{\partial x_i}\dif x+\sum_{i}\int_{\Omega}a_{i}(x) \frac{\partial v}{\partial x_i} w+ \int_{\Omega}a_0(x) vw \dif x,
\end{equation}
under suitable assumptions on $a_{ij},a_{i}$ and $a_0$ such as, e.g., $a_{ij}, a_0\in L^\infty(\Omega)$, $a_i\equiv 0$,  $\sum a_{ij}(x)\xi_j\xi_i\geq c |\xi|^2$ for all $\xi=\{\xi_i\}\in \mathbb{R}^N$, and $a_0(x)\geq \epsilon>0$ f.a.a. $x\in\Omega$. { Further, note that $u\mapsto Au+\max(u,0)$ is an example of a \emph{nonlinear} operator satisfying the above assumptions in this setting.}

For the definition of the constraint set of the QVI we need a map $\Phi$ with the following properties: {$\Phi:H^+\rightarrow H^+$ and $\Phi$ is increasing in $[0,+\infty)$, that is if $v,w\in [0,+\infty)$ and $v\leq w$ then $\Phi(v)\leq \Phi(w)$. The domain of $\Phi$ can then be extended to all $H$ by $x\mapsto\Phi(x^+)$ when needed.} Further properties of $\Phi$ will be specified below.


Next, we define the set-valued map $\mathbf{K}:H^+\to 2^V$ as
\begin{equation}\label{eq:Kobstacle}
    \mathbf{K}(\psi):=\{v\in V: v \leq \psi \}.
\end{equation}
Note that,  { for $\psi\in H^+$}, $\mathbf{K}(\psi)\subset V$ is non-empty ({ $0\in \mathbf{K}(\psi)$}), closed and convex. We also set $ \mathbf{K}(+\infty):=V$.

\subsection{Problem formulations}
The QVI problem of interest is the following one.\\[1ex]
\vspace{1mm}\noindent{\bf Problem $(\mathrm{P_{QVI}})$ }: Let $f\in V'$ be given.
\begin{equation}\label{eq:QVI}\tag{$\mathrm{P_{QVI}}$}
\text{Find } y\in \mathbf{K}(\Phi(y)): \langle A(y)-f,v-y\rangle \geq 0, \quad \forall v\in \mathbf{K}(\Phi(y)).
\end{equation}
This problem admits (in general) multiple solutions due to the non-convexity resulting from $y\mapsto \mathbf{K}(\Phi(y))$. Let $\mathbf{Q}(f)$ denote the associated solution set.

In applications, one is typically interested in confining the solution set $\mathbf{Q}(f)$ to a certain interval $[\underline{y},\overline{y}]$ for some given $\underline{y},\overline{y}\in H$. By considering $f$ a control force, the following optimal control problem arises in this setting:

\vspace{3mm}\noindent{\bf Problem $(\mathbb{P})$ }:
\begin{equation}\label{eq:CQVI}\tag{$\mathbb{P}$}
\begin{split}
&\text{minimize }  J(\mathbf{O},f):=J_1(T_{\sup}(\mathbf{O}),T_{\inf}(\mathbf{O}))+J_2(f)
\text{ over }(\mathbf{O},f)\in 2^H\times U,\\
&\text{subject to }  f\in U_{\text{ad}},\\
&\phantom{\text{subject to }} \mathbf{O}=\{z\in V: z \text{ solves \ref{eq:QVI}}\}.
\end{split}
\end{equation}
Here $U_{\mathrm{ad}}\subset U\subset V'$ is the set of admissible controls.
Moreover, $J_1:H\times H\to \mathbb{R}$ and  $J_2:U_{\mathrm{ad}}\to \mathbb{R}$, and for $\underline{y},\overline{y}\in H$ we define the map 
\begin{equation*}
T_{\sup}(\mathbf{O}):=
\left\{
  \begin{array}{ll}
    \sup_{z\in \mathbf{O}\cap [\underline{y},\overline{y}]} z, & \hbox{$\mathbf{O}\cap [\underline{y},\overline{y}]\neq \emptyset$ ,} \\
   \underline{y}, & \hbox{ otherwise,}
  \end{array}
\right.
\end{equation*}
and analogously 
\begin{equation*}
T_{\inf}(\mathbf{O}):=
\left\{
  \begin{array}{ll}
    \inf_{z\in \mathbf{O}\cap [\underline{y},\overline{y}]} z, & \hbox{$\mathbf{O}\cap [\underline{y},\overline{y}]\neq \emptyset$ ,} \\
   \overline{y}, & \hbox{ otherwise.}
  \end{array}
\right.
\end{equation*}
Problems of type \eqref{eq:CQVI} have not yet been considered in the literature and pose several formidable challenges. For instance, the proof of existence of a solution is highly delicate due to the dependence $y\mapsto \mathbf{K}(\Phi(y))$ and the fact that $y=y(f)$. As a consequence, the direct method of the calculus of variations is only applicable if certain convergence properties of that constraint set can be guaranteed. Another delicacy is related to the potential set-valuedness of the solution of the QVI in the constraint system of \eqref{eq:CQVI}. This fact requires to identify a suitable selection mechanism such as the one identifying the maximal or minimal solution, respectively, if available at all. We note, however, that in the special case where $T_{\inf}(\mathbf{Q}(f))$ and $T_{\sup}(\mathbf{Q}(f))$ also belong to $\mathbf{Q}(f)$, they are the minimal and maximal solution, respectively, to \eqref{eq:QVI} in $V\cap [\underline{y},\overline{y}]$ . Then the proof of existence of solutions to \eqref{eq:CQVI} reduces to a stability result for this minimal and maximal solution to the QVI of interest.

\section{Examples of applications}\label{examples}

Our work here is motivated by the following two application classes. The first one is associated to QVIs that result from coupling a variational inequality (VI) to a nonlinear partial differential equation (PDE). Such models have recently been studied in connection with thermoforming; see \cite{MR3903798}. The other problem class is given by QVIs arising in impulse control as pioneered by Bensoussan and Lions. We briefly describe both problem types in the sequel.

\subsection{QVIs arising from coupling VIs and nonlinear PDEs}\label{sec:VIsPDEs}

%


Consider the following class of compliant obstacle problems where the obstacle is given implicitly by solving a PDE, thus coupling a VI and a PDE. It consists in finding $(y,\Phi,z)\in V\times H\times W$ such that 
\begin{align}
 y \leq \Phi, \quad \langle A(y)-f, y-v \rangle &\leq 0, &\forall v \in V: v \leq \Phi,\label{eq:QVIforu}\\
\langle Bz+G(\Phi,y)-g, w\rangle&=0 &\forall w\in W,\label{eq:pdeForT}\\
\Phi &=Lz, &\text{in } H.\label{eq:mould}
\end{align}
Here, $V\hookrightarrow W\hookrightarrow H \hookrightarrow W'\hookrightarrow V'$, $f,g \in H^+$, $G:H\times H\to H$ is continuous and bounded, i.e., for some $M_G>0$, $\|G(\Phi,y)\|_H\leq M_G (\|\Phi\|_H+\|y\|_H)$, for all $(\Phi,y)\in H\times H$. Further, $L\colon W \to H$ is an increasing {affine} linear continuous map { with $L(0)\geq\nu>0$}. Additionally,  $B\in \L (W,W')$ is strongly monotone and satisfies $\langle Bz^-,z^+\rangle= 0$ for all $z\in W$ { (i.e., $B$ is T-monotone), and $A$ satisfies the assumptions of section \ref{sec:prelim}.}

Under mild conditions, the above problem can be cast into the form of \eqref{eq:QVI} as follows. Let $v\in H$, and consider the problem of finding $z\in W$ such that
\begin{align}
\langle Bz+G(\phi,v)-g, w\rangle&= 0 &\forall w\in W,\label{eq:mh1}\\
\phi &= Lz, &\text{in } H.\label{eq:mh2}
\end{align}
Assuming that for each $v\in H$, $z\mapsto G(Lz,v)$ is monotone, one can show the existence of a unique solution $z(v)\in W$ of \eqref{eq:mh1}--\eqref{eq:mh2}. Now set $\Phi(v):=\phi$.
Suppose additionally that $(G(Lz,y),z^-)\leq 0$ for all $z\in W$ and $y\in  H^+$. {Thus,} $z(v)\geq 0$ and $\Phi(v)=Lz(v)\geq {\nu}$ for all  $v\in H$. {In addition, if $v_1\leq v_2$ implies} 
\begin{equation*}
(G(Lv,v_1)-G(Lw,v_2),(v-w)^+){\geq} 0,
\end{equation*}
for all $w,v$, then $z(v_1)\leq z(v_2)$ and $\Phi(v_1)\leq \Phi(v_2)$,
as $L$ is increasing. This finally shows that \eqref{eq:QVIforu}--\eqref{eq:mould} has the form \eqref{eq:QVI} {with $\Phi$ as an increasing operator}.

In view of controlling the outcome of a stationary  industrial process one is clearly interested in forcing the solution set $\mathbf{Q}(f)$ to be a singleton which is close to a pre-specified desired state $y_d$. This can be modelled as follows. 

\begin{equation}\label{eq:ControlVIPDE}
\begin{split}
&\text{minimize }\frac{1}{2}\int_{\Omega}|T_{\sup}(\mathbf{Q}(f))-T_{\inf}(\mathbf{Q}(f))|^2+ \frac{1}{2}\int_{\Omega}|y_d-T_{\inf}(\mathbf{Q}(f))|^2+\frac{\lambda}{2}|f|_{U}^2\\
&\text{subject to } 0<\nu\leq f \leq F,\quad f\in U,\\
\end{split}
\end{equation}
for given $\lambda, \nu,F >0$. Here, $U$ denotes the underlying control space. Note that the first term in the objective aims at minimizing the distance between the maximal and minimal solution targeting single-valued $\mathbf{Q}(f)$, the second term aims at tracking $y_d$, whereas the last term 
associates an ($U$-) average cost of $\lambda$ to the control action. Notice that the smaller the $\lambda$, the cheaper the cost of the control gets and the smaller one expects the first two terms in the objective. Clearly, \eqref{eq:ControlVIPDE} fits the form of \eqref{eq:CQVI}.

\begin{example}\label{ex1}
A possible setting for this problem class satisfying all assumptions invoked so far is given by $V=H_0^1(\Omega)$, $H=L^2(\Omega)$,  $W=H^1(\Omega)$, with 
\begin{align*}
    \langle Ay,z\rangle&=\sum_{i,j}\int_{\Omega}a_{ij}(x) \frac{\partial y}{\partial x_j}\frac{\partial z}{\partial x_i}\dif x+ \int_{\Omega}a_0(x) yz \dif x, \qquad \forall y,z \in V, \\     \langle Bv,w\rangle&=\sum_{i,j}\int_{\Omega}b_{ij}(x) \frac{\partial v}{\partial x_j}\frac{\partial w}{\partial x_i}\dif x+ \int_{\Omega}b_0(x) vw \dif x, \qquad \forall v,w\in W,
\end{align*}
$a_{ij},b_{ij}, a_0, b_0\in L^\infty(\Omega)$, $\sum a_{ij}(x)\xi_j\xi_i\geq c |\xi|^2$ and $\sum b_{ij}(x)\xi_j\xi_i\geq c |\xi|^2$ for all $\xi=\{\xi_i\}\in \mathbb{R}^N$ and some $c>0$ and $a_0(x)\geq 0$ and $b_0(x)\geq \epsilon>0$ f.a.a. $x\in\Omega$. Additionally, for $y\geq 0$
\begin{equation*}
G(\Phi,y)=(\Phi-y)^+, \quad\text{and} \quad (Lz)(x)=k(x)z(x){+\nu}
\end{equation*}
with $k\in L^\infty(\Omega)^+$ and {$\nu>0$}. Further, $U=\mathbb{R}^M$ for some $M\in \mathbb{N}$, where $f=\sum_{m=1}^M f_m \chi_{\Omega_m}$, $f_m\in \mathbb{R}$ and $\Omega_m\subset \Omega$ for each $m$, and $\|f\|_U:=\|\{f_m\}\|_{\mathbb{R}^M}$. In this setting,  $\Phi:\{y\in H: 0\leq y\}\to H^+$ is non-decreasing, { and defined as $\Phi(z)=Lz$}.
\end{example}

\subsubsection{Thermoforming}
{
Thermoforming is a manufacturing process involving the heating of a membrane or plastic sheet to its pliable temperature and then forcing it (by means of positive or negative gas pressure) onto a mold of some metallic alloy. Subsequently, the membrane deforms and takes on the shape of the mold.  The process is used for large structures in the car industry but also to create microfluidic structures (e.g. channels on the range of micrometers). The amount of applications and the necessity of precision in a variety of settings has generated the need of  research into its modelling and accurate numerical simulation; see \cite{Munro2001}, \cite{Warby2003}, \cite{karamanou2006computational}.

We follow closely \cite{MR3903798} and consider the following modeling assumptions: i) The temperature for the membrane is always a constant value ii) The mold grows in an affine way with respect to changes in its temperature iii) The temperature of the mold is subject to diffusion, convection and boundary conditions arising from the insulated boundary and it depends on the vertical distance between the mold and the membrane. 

The above means that the problem can be formulated as \eqref{eq:QVIforu}-\eqref{eq:mould} where $y$ denotes the position of the membrane, $z$ the temperature, and $\Phi$ the mold. Specifically, $V=H_0^1(\Omega),W=H^1(\Omega),$ and $H=L^2(\Omega)$, meaning that we assume that the membrane is clamped on the boundary, and we assume thermal insulation there as well. The operators $A$ and $B$ are second order elliptic related to the elastic deformation of the membrane, and the diffusion of the temperature, respectively.  Further, $G(\Phi,y)=g(\Phi-y)$ where $g(x)=-1$ if $x\leq 0$, $g(x)=x-1$ if $0<x\leq 1$ and $g(x)=0$ if $1<x$, $(Lz)(x)=l(x)z(x)+\Phi_0(x)$ for some non-negative $l$, and where $\Phi_0$, the shape of the mold at the baseline temperature.

}

\subsection{Impulse control}
We consider impulse control problems (see \cite{MR673169}) for the following stochastic differential equation
\begin{equation*}
du=b(u)dt+\sigma(u)dw(t),\qquad u(0)=x\in \mathbb{R}^N,
\end{equation*}
where $b,\sigma:\mathbb{R}^N\to \mathbb{R}^N$ are Lipschitz functions whose regularity will be specified later. Let $a_{ij}:=\sigma_i\sigma_j^\top/2$.

The control is carried on instances $0\leq\theta_1\leq \theta_2\leq \cdots$, and the system is forced from $y(\theta_n^-)$ to $y(\theta_n^-)+\xi_n$ on the instance $\theta_n$. The energy associated to the process is given by the expected value
\begin{equation*}
\mathbb{E} \left\{\int_0^{\tau_x} f(u(t))e^{-\int_0^t a_0(u(s))\mathrm{d}s}\mathrm{d}t+\sum_{n}(k+c_0(\xi_n)) e^{-\int_0^{\theta_n} a_0(u(s))\mathrm{d}s}\chi_{\theta_n<\infty}\right\}=:\mathcal{H}(f,w,x),
\end{equation*}
where $w=\{(\theta_n,\xi_n)\}_{n=1}^\infty$, and with $\tau_x:=\inf\{t: u(t^-)\notin \Omega \:\text{ or }\: u(t)\notin \Omega \}$, for some open $\Omega\subset \mathbb{R}^N$. In this setting, $f$ uniquely determines the value function
\begin{equation*}
\min_{w\in \mathcal{W}} \mathcal{H}(f,w,x),
\end{equation*}
which represents the cost of the optimal control associated to the initial condition $x$ and the cost function $f$. Here, $\mathcal{W}$ is the set of all possible instances and jumps $\{(\theta_n,\xi_n)\}_{n=1}^\infty$. The optimization of the above quantity via choosing $f$ turns out to be of interest. Indeed, in specific applications $f$ determines the value of a certain stock or energy type per unit of time. The goal is then to solve
\begin{equation}\label{eq:Opt}
\text{minimize } \int_{\Omega}\left(s-\min_{w\in \mathcal{W}} \mathcal{H}(f,w,x)\right)^2\mathrm{d}x+\frac{\lambda}{2}|f|_U^2\quad\text{subject to }f\in U_{\mathrm{ad}},
\end{equation}
where $U_{\mathrm{ad}}\subset U$ is the set of admissible functions $f$,  $|f|_U^2$ represents cost of the choice of $f$, $\lambda>0$ is a weight, and $s\geq 0$ is a desired average cost that could be zero.

\subsubsection{Bounded case} \label{subsub1} We consider $\Omega$ bounded with a sufficiently smooth boundary, with $V=H^1(\Omega)$, $H=L^2(\Omega)$, and $H^+=L_+^2(\Omega)$, where $A$ is of the type \eqref{eq:EllipticOperator} with 
\begin{align*}
a_{ij}&=a_{ji}\in W^{1,\infty}(\mathbb{R}^N),\:\:\:\: 1\leq i,j\leq N, \:\:\: \sum a_{ij}\xi_i\xi_j\geq \alpha |\xi|^2, \:\:\:\: \alpha>0, \forall \xi\in \mathbb{R}^N, \\
a_i,a_0&\in L^\infty(\mathbb{R}^N),\:\:\:\:\:\:      b_i=-a_i+\sum_{j}\frac{\partial a_{ij}}{\partial x_j}\in W^{1,\infty}(\mathbb{R}^N),\\
a_0(x)&\geq r>0, \:\:\:\:\:\: \text{f.a.a.}\:\:\:\: x\in \Omega.
\end{align*}
{Further suppose that the coefficients are such that the induced $A$ is strongly monotone. Note that order one homogeneity, Lipschitz continuity, and T-monotonicity already hold; the latter follows immediately as $\langle A z^-,z^+\rangle =0$ for all $z$}. Consider 
\begin{equation*}
(\Phi y)(x):=k+\mathrm{essinf}_{x+\xi \in \overline{\Omega}} (c_0(\xi)+y(x+\xi)),
\end{equation*}
where $c_0\in C(\mathbb{R}^N_+, \mathbb{R})$ is such that $c_0(0)=0$ is sub-linear and non-decreasing, with $f\in L^p(\Omega)$ with $p>N$ and $f\geq 0$. 

In this setting one can show that the solution set  $\mathbf{Q}(f)$ of \eqref{eq:QVI} is a singleton, $\mathbf{Q}(f)=\{y^*\}$, and $y^*$ determines the value function of the impulse control problem of interest (see \cite{MR673169}), i.e.,
\begin{equation*}
y^*(x)=\min_{w\in \mathcal{W}} \mathcal{H}(f,w,x),
\end{equation*}
a.e. for $x\in \Omega$. Hence, problem \eqref{eq:Opt} amounts to controlling the solution to the quasi-variational inequality and is, thus, of the form \eqref{eq:CQVI}.

\subsubsection{Unbounded case}\label{sec:unboundedCase}
Let $\omega(x):=\exp(-\mu\sqrt{1+|x|^2})$ for $x\in\mathbb{R}^N$, and consider the weighted spaces $V=H^1(\mathbb{R}^N,\omega)$, and $H=L^2(\mathbb{R}^N,\omega)$ with  $H^+=L_+^2(\mathbb{R}^N,\omega)$ the usual cone of non-negative maps.  In particular, $L^2(\mathbb{R}^N,\omega)$ is the space of (equivalence classes of) measurable functions $h:\mathbb{R}^N\to \mathbb{R}$ for which $|h|^2_{L^2(\mathbb{R}^N,\omega)}:=\int_{\mathbb{R}^N} |h(x)|^2 \omega(x)^2\mathrm{d}x<+\infty$, and $H^1(\mathbb{R}^N,\omega)$ is the space of (equivalence classes of) of functions $g:\mathbb{R}^N\to \mathbb{R}$ for which $g$ and its weak gradient $\nabla g$ belong to $L^2(\mathbb{R}^N,\omega)$ and $L^2(\mathbb{R}^N,\omega)^N $, respectively. 

The operator $A:V\to V'$ is given by
\begin{equation*}
\langle Av,y\rangle=\sum_{i,j}\int_{\mathbb{R}^N}a_{ij} \frac{\partial v}{\partial x_j}\frac{\partial y}{\partial x_i}\omega^2\dif x+\sum_{i}\int_{\mathbb{R}^N}a_i\frac{\partial v}{\partial x_j} y \omega^2\dif x + a_0\int_{\mathbb{R}^N} vy \omega^2\dif x,
\end{equation*}
with $a_{ij}, a_i, b_i$ as in Section \ref{subsub1}, but with $a_0(x) = r$ for all $x$ and a real $r>0$.

Define
\begin{equation*}
U:=\{f:\mathbb{R}^N\to \mathbb{R}: f \text{ is measurable and } 0\leq f(x)\leq C(1+|x|^s), \:\:\:\: {\forall} x\in\mathbb{R}^N\},
\end{equation*}
where the bound above holds for some $C>0$ {and some $s\geq 0$,}
and the map $\Phi$ by
\begin{equation*}
(\Phi y)(z):=k+\mathrm{essinf}_{\xi \geq 0} (c_0(\xi)+y(z+\xi)),
\end{equation*}
where $k>0$, and $c_0\in C(\mathbb{R}^N_+, \mathbb{R})$, with $c_0(0)=0$, is sub-linear, non-decreasing with $\lim_{|\xi|\to+\infty}c_0(\xi)=+\infty$ and for which $c_0(\xi)\leq a |\xi|^\gamma$ for some $a,\gamma>0$.

In this scenario, the set of solutions $\mathbf{Q}(f)$ of \eqref{eq:QVI} is  not necessarily a singleton, and both $T_{\inf}(\mathbf{Q}(f))$ and $T_{\sup}(\mathbf{Q}(f))$ have probabilistic interpretations associated to the value function in impulse control. In particular,
\begin{equation*}
T_{\inf}(\mathbf{Q}(f))(x)=\min_{w\in \mathcal{W}} \mathcal{H}(f,w,x),
\end{equation*}
i.e., $T_{\inf}(\mathbf{Q}(f))$ is the value function associated with the initial impulse control problem. Then \eqref{eq:Opt} has the form of \eqref{eq:CQVI} for appropriate choices of $J_1$ and $J_2$.

\section{Increasing maps and QVI solutions}\label{monotonicity}




This section is strongly related to a result due to Tartar \cite{Tartar74}; see also {\cite[Chapter 15]{Aubin1979}}. Upon converting $(\mathrm{P_{QVI}})$ into a fixed-point equation, the corresponding approach yields the existence of a solution for an increasing fixed-point map under very mild assumptions. We note that the technique is analogous to the one by Kolodner and Birkhoff; see \cite{Kolodner68,Baiocchi,Birkhoff61}. 

We start by recalling Tartar's result (compare \cite[Chapter 15, \S15.2]{Aubin1979}) which rests on increasing maps. In this vein, we call $T:H\rightarrow H$ increasing iff $v\leq w$ implies $T(v)\leq T(w)$.

\begin{thm}[\textsc{Birkhoff-Tartar}]\label{thm:BirkhoffTartar} 
Suppose $T:H\rightarrow H$ is an increasing map and let $\underline{y}$ be a sub-solution and $\overline{y}$ be a  super-solution of the map $T$, that is:
\begin{equation*}
\underline{y}\leq T(\underline{y}) \quad \text{ and } \quad T(\overline{y})\leq \overline{y}.
\end{equation*}
If  $\underline{y}\leq  \overline{y}$, then the set of fixed points of the map $T$ in the interval $[\underline{y}, \overline{y}]$ is non-empty and has a smallest and a largest element.
\end{thm}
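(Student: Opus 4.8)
The plan is to prove the existence of a \emph{largest} fixed point of $T$ in $[\underline{y},\overline{y}]$; the smallest one then follows by the order-dual argument (swap the roles of sub- and super-solution and reverse the inequalities). The whole proof is purely order-theoretic: only the lattice structure of $H$, its completeness properties recalled in Section~\ref{sec:prelim}, and the monotonicity of $T$ are used; neither $A$, nor $\Phi$, nor the QVI itself plays any role here.

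First I would introduce the set of sub-solutions lying in the interval,
\[
C := \{\, v \in [\underline{y},\overline{y}] : v \le T(v) \,\},
\]
and record three facts. (i) $C \neq \emptyset$, since $\underline{y} \in C$ by hypothesis. (ii) $T(C) \subseteq C$: if $v \in C$ then $\underline{y} \le v \le T(v)$ and, by monotonicity, $T(v) \le T(\overline{y}) \le \overline{y}$, so $T(v) \in [\underline{y},\overline{y}]$; moreover $v \le T(v)$ forces $T(v) \le T(T(v))$, hence $T(v)\in C$. (iii) $C$ is closed under finite suprema and under suprema of chains: if $v_1,v_2 \in C$ then $v_i \le T(v_i) \le T(\sup(v_1,v_2))$ for $i=1,2$, so $\sup(v_1,v_2) \le T(\sup(v_1,v_2))$, and clearly $\sup(v_1,v_2) \in [\underline{y},\overline{y}]$; and if $R \subseteq C$ is a chain, it is bounded above by $\overline{y}$, so $w := \sup R$ exists in $H$ and satisfies $\underline{y} \le w \le \overline{y}$ because the order is preserved in the limit, while $r \le w$ for every $r \in R$ gives $r \le T(r) \le T(w)$, so $T(w)$ is an upper bound of $R$ and $w = \sup R \le T(w)$, i.e. $w \in C$.

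Next I would apply Zorn's lemma to the family of chains contained in $C$ (ordered by inclusion, the union of a chain of chains being a chain) to obtain an inclusion-maximal chain $R \subseteq C$, and set $w := \sup R$, which lies in $C$ by (iii). Since $T(w) \in C$ and $T(w) \ge w$, the set $R \cup \{T(w)\}$ is a chain in $C$ containing $R$, so maximality forces $T(w) \in R$, whence $T(w) \le \sup R = w$; combined with $w \le T(w)$ this gives $T(w) = w$. To see that $w$ is the largest fixed point in $[\underline{y},\overline{y}]$, let $z \in [\underline{y},\overline{y}]$ satisfy $T(z)=z$; then $z \in C$, so by (iii) $\sup(w,z) \in C$, and $\sup(w,z) \ge w = \sup R$ makes $R \cup \{\sup(w,z)\}$ a chain in $C$ containing $R$; maximality gives $\sup(w,z) \le \sup R = w$, hence $z \le \sup(w,z) \le w$. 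This proves the claim for the largest element; replacing $C$ by $D := \{\, v \in [\underline{y},\overline{y}] : T(v) \le v \,\}$ (which contains $\overline{y}$, is $T$-invariant, and is stable under finite infima and under infima of chains bounded below) and running the dual argument yields the smallest fixed point as the infimum of an inclusion-maximal chain in $D$.

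The only genuine obstacle is the order-theoretic bookkeeping in step (iii): one must be certain that the supremum of a possibly uncountable, transfinite chain inside the interval exists in $H$ and is compatible with the order, which is precisely what the structural hypotheses on $H$ in Section~\ref{sec:prelim} provide (and what distinguishes this setting from a naive appeal to the Knaster--Tarski theorem). In particular, the tempting shortcut ``take $w := \sup C$ directly'' would require Dedekind completeness of the lattice $H$, which fails for the model spaces such as $L^2(\Omega)$; routing the construction through a maximal chain and Zorn's lemma circumvents this. Everything else is a routine check that $C$ (resp. $D$) is $T$-invariant and sup- (resp. inf-) stable.
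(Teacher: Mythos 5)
Your argument is correct, but it follows a genuinely different route from the one the paper relies on. You obtain the \emph{largest} fixed point as the supremum of an inclusion-maximal chain inside the set $C=\{v\in[\underline{y},\overline{y}]:v\le T(v)\}$ of sub-solutions (a Knaster--Tarski-type construction made rigorous without Dedekind completeness by routing the supremum through a maximal chain and Zorn's lemma), and the smallest fixed point by the order-dual argument on $D=\{v:T(v)\le v\}$. The paper, following Tartar as presented in Aubin (Proposition 2, Ch.~15, \S 15.2), instead applies Zorn's lemma directly to the sets $Z(T)=\{x\in X(T):x\le y\ \forall y\in Y(T)\}$ and $\tilde Z(T)=\{y\in Y(T):x\le y\ \forall x\in X(T)\}$, characterizing the \emph{minimal} fixed point $\mathbf{m}(T)$ as the maximal element of $Z(T)$ and the \emph{maximal} fixed point $\mathbf{M}(T)$ as the minimal element of $\tilde Z(T)$; chains in these sets have suprema/infima in $H$ by the generalized Cauchy sequence argument recalled in Section~\ref{sec:prelim}. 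Both proofs are sound order-theoretic applications of Zorn's lemma, but they buy different things: your construction is arguably more economical and closer to the classical lattice-theoretic picture (largest fixed point as supremum of post-fixed points), whereas the paper's characterization via $Z(T)$ and $\tilde Z(T)$ is precisely the handle exploited later (the inclusions \eqref{eq:IneqSets1}--\eqref{eq:IneqSets4} and the $Z^\bullet$, $\tilde Z^\bullet$ variants) to compare $\mathbf{m}$ and $\mathbf{M}$ under perturbations, so your proof establishes Theorem~\ref{thm:BirkhoffTartar} but would need a supplementary step to recover that characterization. One small point to make explicit: your closure of $C$ under binary suprema uses that $\sup(v_1,v_2)=v_1+(v_2-v_1)^+$ is a \emph{least} upper bound, which is part of the Hilbert-lattice framework assumed in Section~\ref{sec:prelim} rather than a consequence of self-duality of $H^+$ alone, so it is worth citing that hypothesis where you use it.
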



%

We apply the above result to $(\mathrm{P_{QVI}})$ and first need to introduce the following VI.

\vspace{1mm}\noindent{\bf{Problem $(\mathrm{P_{VI}})$}}: Let $\psi\in H^+$, $f\in V'$ be given.
\begin{equation}\label{eq:VI}\tag{$\mathrm{P_{VI}}$}
\text{Find } y\in \mathbf{K}(\psi): \langle A{(y)}-f,v-y\rangle \geq 0, \quad \forall v\in \mathbf{K}(\psi).
\end{equation}

The solution to \eqref{eq:VI} can be proven to be unique by standard methods. For $(f,\psi)\in (V',H^+)$, we denote the unique solution to  \eqref{eq:VI} as $S(f,\psi)$. Before we can make use of Theorem \ref{thm:BirkhoffTartar}, we state the following property of the map $(f,\psi)\mapsto S(f,\psi)$ {that relies on the fact that $A$ is T-monotone}. Its proof can be found in  {\cite[Section 4:5, Theorem 5.1]{Rodrigues1987}}.

\begin{prop}\label{Prop:IncreasingSolMap}
Let $f_1,f_2\in V'$ and $\psi_1,\psi_2\in H^+$ be such that $f_1\leq f_2$ in $V'$ and $\psi_1\leq\psi_2$. Then it holds that
$S(f_1,\psi_1)\leq S(f_2,\psi_2)$.
\end{prop}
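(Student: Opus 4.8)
The plan is to prove the monotonicity of the solution map $S$ by a direct testing argument exploiting the T-monotonicity of $A$. Write $y_1 := S(f_1,\psi_1) \in \mathbf{K}(\psi_1)$ and $y_2 := S(f_2,\psi_2) \in \mathbf{K}(\psi_2)$, so that
\begin{align*}
\langle A(y_1) - f_1, v - y_1\rangle &\geq 0 \qquad \forall v \in \mathbf{K}(\psi_1),\\
\langle A(y_2) - f_2, w - y_2\rangle &\geq 0 \qquad \forall w \in \mathbf{K}(\psi_2).
\end{align*}
The goal is to show $(y_1 - y_2)^+ = 0$, which by definition of the order gives $y_1 \leq y_2$. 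The natural candidate test functions are $v := y_1 - (y_1 - y_2)^+ = \inf(y_1,y_2)$ in the first VI and $w := y_2 + (y_1 - y_2)^+ = \sup(y_1,y_2)$ in the second. First I would check these are admissible: $\inf(y_1,y_2) \leq y_1 \leq \psi_1$, and since $\psi_1 \leq \psi_2$ one also has $\inf(y_1,y_2)\le y_2\le\psi_2$, so in particular $\inf(y_1,y_2) \le \psi_1$; moreover $\inf(y_1,y_2)\in V$ because $(y_1-y_2)^+\in V$ by the standing assumption on $(\cdot)^+\colon V\to V$. Hence $v\in\mathbf{K}(\psi_1)$. Similarly $\sup(y_1,y_2) = y_2 + (y_1-y_2)^+ \le \psi_2$? — here one needs $\sup(y_1,y_2)\le\psi_2$, which follows since $y_1\le\psi_1\le\psi_2$ and $y_2\le\psi_2$ imply $\sup(y_1,y_2)\le\psi_2$; and $\sup(y_1,y_2)\in V$ for the same reason. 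So $w\in\mathbf{K}(\psi_2)$.

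Next I would substitute: with $v - y_1 = -(y_1-y_2)^+$ and $w - y_2 = (y_1-y_2)^+$ the two inequalities become
\begin{align*}
\langle A(y_1) - f_1, -(y_1-y_2)^+\rangle &\geq 0,\\
\langle A(y_2) - f_2, (y_1-y_2)^+\rangle &\geq 0.
\end{align*}
Adding them yields
\[
\langle A(y_2) - A(y_1), (y_1-y_2)^+\rangle + \langle f_1 - f_2, (y_1-y_2)^+\rangle \geq 0,
\]
i.e.
\[
\langle A(y_1) - A(y_2), (y_1-y_2)^+\rangle \leq \langle f_1 - f_2, (y_1-y_2)^+\rangle.
\]
Now the right-hand side is $\le 0$: since $f_1 \leq f_2$ in $V'$ and $(y_1-y_2)^+ \in H^+ \cap V$, the duality pairing $\langle f_2 - f_1, (y_1-y_2)^+\rangle$ is nonnegative (this is exactly what ordering in $V'$ means, tested against the nonnegative element). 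On the other hand, the left-hand side is $\ge 0$ by the T-monotonicity of $A$, applied with $u = y_1$, $v = y_2$. Therefore $\langle A(y_1) - A(y_2), (y_1-y_2)^+\rangle = 0$, and the equality case in the definition of T-monotonicity forces $(y_1 - y_2)^+ = 0$, which is the desired conclusion $y_1 \leq y_2$.

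The only genuinely delicate point — and the one I would state most carefully — is the admissibility of the test functions, specifically that $\sup(y_1,y_2) \in \mathbf{K}(\psi_2)$ and $\inf(y_1,y_2) \in \mathbf{K}(\psi_1)$; this is where the hypothesis $\psi_1 \leq \psi_2$ is used, and where one needs the lattice operations to map $V$ into $V$ (guaranteed by the standing assumption on $(\cdot)^+$). Everything else is a two-line computation. One should also note that the argument does not use strong monotonicity or Lipschitz continuity of $A$, nor homogeneity — only T-monotonicity (with its strict equality clause) and the compatibility of the orders on $V$, $H$, and $V'$; this matches the remark in the text that the result "relies on the fact that $A$ is T-monotone." For completeness I would cite \cite[Section 4:5, Theorem 5.1]{Rodrigues1987} as the reference where this is carried out in detail.
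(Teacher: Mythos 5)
Your proof is correct and is essentially the classical comparison argument that the paper itself does not reproduce but delegates to the cited reference (Rodrigues, Section 4:5, Theorem 5.1): test the first VI with $\inf(y_1,y_2)$ and the second with $\sup(y_1,y_2)$, add, and use $f_1\leq f_2$ together with the strict equality clause of T-monotonicity to conclude $(y_1-y_2)^+=0$. You also correctly flag the only delicate point, the admissibility of $\sup(y_1,y_2)$ in $\mathbf{K}(\psi_2)$, which rests on the lattice property that $y_1\leq\psi_2$ and $y_2\leq\psi_2$ imply $y_2+(y_1-y_2)^+\leq\psi_2$ --- valid in the paper's standing ordered-Hilbert-space setting (and immediate in the prototypical $L^2$ case).
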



We note that in the above result $f_1\leq f_2$ in $V'$ is well-defined, since {$V'$} inherits the order in $H$, so that $f_1\leq f_2$ iff $\langle f_2-f_1,v\rangle\geq 0$ for all $v\in V$ such that $v\geq 0$. Further observe that the case $\psi=+\infty$ is also allowed, where $S(f,+\infty)$ denotes the solution of the unconstrained problem 
\begin{equation*}
\text{ Find } y\in V \text{such that } \langle A(y),v\rangle=\langle f,v\rangle, \quad \text{for all }v\in V.
\end{equation*}
This implies that
\begin{equation*}
    S(f, \psi)\leq S(f, +\infty), \quad \forall f\in V', \psi \in H^+.
\end{equation*}

In order to apply the Birkhoff-Tartar Theorem to the QVI problem of interest, we need to identify a proper interval $[\underline{y},\overline{y}]$, with $\underline{y}$ a sub-solution and $\overline{y}$ a super-solution of the map $y\mapsto S(f,\Phi(y))$. In our case, we choose $\underline{y}=0$, since we infer from Proposition \ref{Prop:IncreasingSolMap} that \begin{equation}\label{eq:subsol}
0=S(0,\Phi(0))\leq S(f, \Phi(0)),
\end{equation}
for any $f\geq 0$ in $V'$. On the other hand, we assume that $f\in U_{\mathrm{ad}}\subset V'$ is bounded from above (in the $V'$-order) by some $F$. Then let $\overline{y}=S(F,+\infty)$, for which
\begin{equation}\label{eq:supsol}
S(f,\Phi(\overline{y})) \leq S(F,+\infty)= \overline{y}.
\end{equation}
This leads to the following result.


\begin{thm}[\textsc{Tartar}]\label{thm:Tartar} Let $U_{\mathrm{ad}}\subset \{f\in V': 0\leq f\leq F\}$ for some  $0\leq F\in V'$. Then, there are $\underline{y},\overline{y}$ such that  for each $f\in U_{\mathrm{ad}}$, the set of fixed points of the map $y\mapsto S(f,\Phi(y))$ in the interval $[\underline{y},\overline{y}]$ is non-empty and contains a smallest and a largest element, i.e., there are fixed points $y^*_{\mathrm{min}}$ and $y^*_{\mathrm{max}}$ in $V$ such that
\begin{equation*}
    \mathbf{Q}(f)\cap[\underline{y}, \overline{y} ] =\mathbf{Q}(f)\cap \left[y^*_{\mathrm{min}}, y^*_{\mathrm{max}} \right]\neq \emptyset.
\end{equation*}
\end{thm}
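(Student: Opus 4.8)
The plan is to reduce the statement to a direct application of the Birkhoff–Tartar Theorem (Theorem \ref{thm:BirkhoffTartar}) to the fixed-point map $T_f \colon y \mapsto S(f,\Phi(y))$, and then to translate the conclusion about fixed points of $T_f$ back into a statement about the solution set $\mathbf{Q}(f)$ of \eqref{eq:QVI}. The first, essentially bookkeeping, step is to record that $y$ solves \eqref{eq:QVI} if and only if $y$ is a fixed point of $T_f$: indeed $y \in \mathbf{K}(\Phi(y))$ solving the QVI is exactly the statement that $y$ solves the VI \eqref{eq:VI} with obstacle $\psi = \Phi(y)$, i.e. $y = S(f,\Phi(y))$ by uniqueness of the VI solution. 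Hence $\mathbf{Q}(f)$ is precisely the fixed-point set of $T_f$ in $V$.

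Next I would verify the hypotheses of Theorem \ref{thm:BirkhoffTartar} for $T_f$ on the interval $[\underline{y},\overline{y}]$ with $\underline{y} := 0$ and $\overline{y} := S(F,+\infty)$. Monotonicity of $T_f \colon H \to H$ follows from Proposition \ref{Prop:IncreasingSolMap} together with the assumed monotonicity of $\Phi$ on $[0,+\infty)$ (and the extension $x \mapsto \Phi(x^+)$): if $v \le w$ then $\Phi(v) \le \Phi(w)$ in $H^+$, so $S(f,\Phi(v)) \le S(f,\Phi(w))$. That $\underline{y}=0$ is a sub-solution is \eqref{eq:subsol}: since $f \ge 0$ and $\Phi(0) \in H^+$, Proposition \ref{Prop:IncreasingSolMap} gives $0 = S(0,\Phi(0)) \le S(f,\Phi(0)) = T_f(0)$. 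That $\overline{y}$ is a super-solution is \eqref{eq:supsol}: using $\Phi(\overline{y}) \le +\infty$, $f \le F$, and Proposition \ref{Prop:IncreasingSolMap}, one has $T_f(\overline{y}) = S(f,\Phi(\overline{y})) \le S(F,+\infty) = \overline{y}$. Finally $\underline{y} = 0 \le \overline{y} = S(F,+\infty)$ follows from \eqref{eq:subsol} with $f = F$ (or directly from $S(0,+\infty)=0 \le S(F,+\infty)$). One subtlety to mention: $T_f$ is defined a priori on $H$ but its fixed points land in $V$ because $S(f,\psi) \in V$; the interval $[\underline{y},\overline{y}]$ is taken in $H$, and the extremal fixed points $y^*_{\min}, y^*_{\max}$ produced by the theorem automatically lie in $V$.

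Applying Theorem \ref{thm:BirkhoffTartar} then yields that the fixed-point set of $T_f$ in $[\underline{y},\overline{y}]$ is nonempty with a smallest element $y^*_{\min}$ and a largest element $y^*_{\max}$, both in $V$. By the reformulation in the first step this set equals $\mathbf{Q}(f) \cap [\underline{y},\overline{y}]$. Since $y^*_{\min}$ and $y^*_{\max}$ are themselves in this set, $[y^*_{\min}, y^*_{\max}] \subset [\underline{y},\overline{y}]$, and every element of $\mathbf{Q}(f) \cap [\underline{y},\overline{y}]$ lies between $y^*_{\min}$ and $y^*_{\max}$; conversely any element of $\mathbf{Q}(f) \cap [y^*_{\min},y^*_{\max}]$ lies in $\mathbf{Q}(f) \cap [\underline{y},\overline{y}]$, so the two sets coincide and are nonempty, which is exactly the claimed identity. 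The independence of $\underline{y},\overline{y}$ from the particular $f \in U_{\mathrm{ad}}$ is built in, since they depend only on $F$.

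I do not anticipate a genuine obstacle here: the theorem is essentially a corollary of the two quoted results, and the only points requiring care are (i) the clean equivalence between QVI solutions and fixed points of $T_f$, and (ii) the domain/codomain juggling between $H$ (where the order interval and Birkhoff–Tartar live) and $V$ (where the VI solutions and hence $\mathbf{Q}(f)$ live), including checking that $\Phi$ composed with the $(\cdot)^+$ extension is still monotone in the relevant sense. If anything is delicate it is making sure the extremal fixed points are recognized as elements of $\mathbf{Q}(f)$ and thus of $V$, rather than merely of $H$; this follows immediately from $y = S(f,\Phi(y)) \in V$ for any fixed point $y$.
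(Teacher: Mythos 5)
Your proposal is correct and follows essentially the same route as the paper, which presents Theorem \ref{thm:Tartar} as a direct consequence of the Birkhoff--Tartar Theorem applied to $y\mapsto S(f,\Phi(y))$ with sub-solution $\underline{y}=0$ and super-solution $\overline{y}=S(F,+\infty)$, both justified via Proposition \ref{Prop:IncreasingSolMap} as in \eqref{eq:subsol}--\eqref{eq:supsol}. Your additional bookkeeping (equivalence of $\mathbf{Q}(f)$ with the fixed-point set, and the $H$-versus-$V$ issue) is exactly the implicit content of the paper's argument.
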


In light of Theorems \ref{thm:BirkhoffTartar} and \ref{thm:Tartar}, there exist operators $\mathbf{m}$ and $\mathbf{M}$, which map an increasing map on the interval $[\underline{y}, \overline{y} ]$ to its minimal and maximal fixed points, respectively; insofar that sub- and super-solutions $\underline{y}$ and $\overline{y} $ exist.

We fix some notation now. In the case of a general increasing map $T$, with sub- and super-solutions $\underline{y}$ and $\overline{y}$, respectively, we denote by $\mathbf{m}(T)$ and $\mathbf{M}(T)$ its minimal and maximal fixed points in some interval $[\underline{y}, \overline{y} ]$. When the map $T$ is given by $y\mapsto S(f,\Phi(y))$ for some $f$, we write $\mathbf{m}(f)$ and $\mathbf{M}(f)$. In particular, it follows that if $\mathbf{Q}(f)$ is the set of solutions of \eqref {eq:QVI}, then 
\begin{equation*}
T_{\sup}(\mathbf{Q}(f))=\mathbf{M}(f), \qquad \text{and}\qquad T_{\inf}(\mathbf{Q}(f))=\mathbf{m}(f),
\end{equation*}
where $T_{\sup}, T_{\inf}$ are given in \eqref{eq:CQVI}.

For an operator $T$ as in Theorem \ref{thm:BirkhoffTartar}, the fixed points $\mathbf{m}(T)$ and $\mathbf{M}(T)$ are determined (see {\cite[Proposition 2, Ch. 15, \S 15.2]{Aubin1979}} for a proof) by the maximal and minimal elements of the sets $Z(T)$ and $\tilde{Z}(T)$, respectively, where
\begin{align*}
            &Z(T)=\{x\in X(T): x\leq y \text { for all } y\in Y(T)\},\\
             &\tilde{Z}(T)=\{y\in Y(T): x\leq y \text { for all } x\in X(T)\},
\end{align*}
and
\begin{align*}
    &X(T)=\{x\in H: x\in [\underline{y}, \overline{y}] \text{ and } x\leq T(x)\}, \\
        &Y(T)=\{x\in H: x\in [\underline{y}, \overline{y}] \text{ and } x\geq T(x)\}.
\end{align*}
In the following section, we use this setting for $\mathbf{m}(T)$ and $\mathbf{M}(T)$ to establish stability results. We also provide an equivalent definition that is exploited subsequently.


\section{Stability results}\label{OperatorResults}

For the existence of optimal controls for our problem of interest, we need to study the stability of the maps $f\mapsto \mathbf{m}(f)$ and $f\mapsto \mathbf{M}(f)$. In the general case of an increasing map $T$, we now prove that $\mathbf{m}(T)$ and $\mathbf{M}(T)$ are stable from below and above, respectively, provided $T$ has certain complete continuity properties. 



\begin{thm}\label{thm:approximationMinMax}
Let $T, R_n, U_n :H\rightarrow V \subset H$ be increasing mappings with $n\in \mathbb{N}$. Assume further:
\begin{itemize}
\item[(i)] $T:V\rightarrow V$ is completely continuous with respect to monotone sequences, i.e., if $v_n\rightharpoonup v$ in $V$ and $v_n \leq v_{n+1}$ (or $v_n \geq v_{n+1}$) for all $n\in\mathbb{N}$, then $T(v_n)\rightarrow T(v)$ in $V$.
\item[(ii)] Sets of fixed points of $T, R_n, U_n$ (assuming they exist) are uniformly bounded in $V$ with respect to $n\in \mathbb{N}$, and  that 
\begin{equation*}
   \underline{y}\leq  R_n(v)\leq R_{n+1}(v)\leq T (v)\leq U_{n+1}(v)\leq U_n(v)\leq \overline{y}, \quad \forall v\in [\underline{y}, \overline{y}],  n\in \mathbb{N},
\end{equation*}
for some $\underline{y}$ and $\overline{y}$ in $V$ such that $\underline{y}\leq \overline{y}$ .
\item[(iii)] If $\{v_n\}$ and $\{w_n\}$ are bounded sequences in $V$ such that $ v_n\leq v_{n+1}\leq \overline{y}$ and $ w_n\geq w_{n+1}\geq \underline{y}$, then 
\begin{align*}
    &\lim_{n\rightarrow\infty}\|R_n(v_n)-T (v_n)\|_{V}=0 &\text{ and } & &\lim_{n\rightarrow\infty}\|U_n(w_n)-T (w_n)\|_V=0.
\end{align*}
\end{itemize}
Let $\mathbf{m}$ and  $\mathbf{M}$ be the operators that take an increasing map with sub- and supersolutions $[\underline{y}, \overline{y}]$ into the minimal and maximal solutions of Theorem \ref{thm:BirkhoffTartar}, respectively.
Then
\begin{align*}
    \mathbf{m}(R_n)\rightarrow \mathbf{m}(T) \text{ in } V, \qquad\text{ and }\qquad
        \mathbf{M}(U_n)\rightarrow \mathbf{M}(T) \text{ in } V,
\end{align*}
and
\begin{align*}
    \mathbf{m}(R_n)\uparrow \mathbf{m}(T) \text{ in } H, \qquad\text{ and }\qquad
        \mathbf{M}(U_n)\downarrow \mathbf{M}(T) \text{ in } H,
\end{align*}
as $n\rightarrow\infty$, respectively.
\end{thm}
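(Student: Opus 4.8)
The plan is to reduce the convergence of the extremal fixed points to a monotone limiting argument in the weak topology of $V$, using the complete continuity assumption (i) to pass to the limit in the fixed-point relation. I will treat $\mathbf{m}(R_n)\to\mathbf{m}(T)$ in detail; the statement for $\mathbf{M}(U_n)$ follows by the obvious symmetric argument (replacing $R_n$ by $U_n$, ``sub'' by ``super'', $\underline{y}$ by $\overline{y}$, and reversing all inequalities). First I would establish monotonicity of the sequence $\{\mathbf{m}(R_n)\}$: since $R_n(v)\le R_{n+1}(v)$ for all $v\in[\underline y,\overline y]$ by (ii), and both maps are increasing, a standard comparison (e.g. via the characterization of $\mathbf{m}$ through the set $X(T)$ at the end of Section \ref{monotonicity}: any fixed point of $R_n$ lies in $X(R_{n+1})$, hence dominates $\mathbf{m}(R_{n+1})$ — or rather, $\mathbf{m}(R_n)\le\mathbf{m}(R_{n+1})$ because $\mathbf{m}(R_n)=R_n(\mathbf{m}(R_n))\le R_{n+1}(\mathbf{m}(R_n))$ shows $\mathbf{m}(R_n)$ is a sub-solution of $R_{n+1}$, and by Theorem \ref{thm:BirkhoffTartar} applied on $[\mathbf{m}(R_n),\overline y]$ the minimal fixed point $\mathbf{m}(R_{n+1})$ dominates it) yields $\underline y\le\mathbf{m}(R_n)\le\mathbf{m}(R_{n+1})\le\mathbf{m}(T)$, the last inequality because $\mathbf{m}(R_n)$ is a sub-solution of $T$ (as $R_n\le T$) so it is $\le$ the minimal fixed point of $T$ on the relevant interval. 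Hence $\{\mathbf{m}(R_n)\}$ is non-decreasing and order-bounded above by $\mathbf{m}(T)$.

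Next, using (ii) the sequence $\{\mathbf{m}(R_n)\}$ is bounded in $V$ (uniform boundedness of the fixed-point sets), so along a subsequence $\mathbf{m}(R_n)\rightharpoonup y^\ast$ in $V$; since it is monotone non-decreasing in $H$ and order-bounded, it converges in $H$ (by the order-completeness property of $H$ recalled in Section \ref{sec:prelim}), and the weak-$V$ limit must coincide with the $H$-limit, so the full sequence satisfies $\mathbf{m}(R_n)\rightharpoonup y^\ast$ in $V$ and $\mathbf{m}(R_n)\uparrow y^\ast$ in $H$, with $y^\ast\le\mathbf{m}(T)$ and $y^\ast\in[\underline y,\overline y]$. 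Now I would show $y^\ast$ is a fixed point of $T$. Write $v_n:=\mathbf{m}(R_n)$; this is monotone non-decreasing and bounded in $V$, so by (i), $T(v_n)\to T(y^\ast)$ in $V$. On the other hand, $v_n=R_n(v_n)$, and by (iii) applied with $w_n$ irrelevant and $v_n\le v_{n+1}\le\overline y$, we have $\|R_n(v_n)-T(v_n)\|_V\to 0$; combining, $v_n=R_n(v_n)\to T(y^\ast)$ in $V$. But $v_n\rightharpoonup y^\ast$ in $V$, so by uniqueness of weak limits $T(y^\ast)=y^\ast$, i.e. $y^\ast$ is a fixed point of $T$ in $[\underline y,\overline y]$. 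Since $y^\ast\le\mathbf{m}(T)$ and $\mathbf{m}(T)$ is the \emph{smallest} fixed point of $T$ in that interval, we get $y^\ast=\mathbf{m}(T)$. Moreover the convergence $v_n\to T(y^\ast)=\mathbf{m}(T)$ just obtained is \emph{strong} in $V$, which gives $\mathbf{m}(R_n)\to\mathbf{m}(T)$ in $V$, and $\mathbf{m}(R_n)\uparrow\mathbf{m}(T)$ in $H$ follows from the monotone $H$-convergence established above together with the identification of the limit.

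The main obstacle I anticipate is the bookkeeping around the interval on which the Birkhoff–Tartar theorem is applied for each $R_n$: the theorem requires a sub-solution and a super-solution, and one must check that $\underline y$ and $\overline y$ genuinely serve this role for every $R_n$ and $U_n$ simultaneously — this is exactly what the chain of inequalities in (ii) is designed to guarantee ($\underline y\le R_n(\underline y)$ follows from $\underline y\le R_n(v)$ for all $v$; $R_n(\overline y)\le T(\overline y)\le\overline y$ needs $T(\overline y)\le\overline y$, which must be extracted from the hypotheses, presumably because $\overline y$ is a super-solution of $T$). A secondary delicate point is justifying that the weak-$V$ limit and the $H$-limit of the monotone sequence coincide; this uses the continuous (indeed dense) embedding $V\hookrightarrow H$ so that weak-$V$ convergence implies weak-$H$ convergence, while the monotone sequence converges \emph{strongly} in $H$, forcing agreement of the limits. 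Everything else is a routine application of (i) and (iii) to pass to the limit in $v_n=R_n(v_n)$, together with the extremality of $\mathbf{m}(T)$ to pin down the limit.
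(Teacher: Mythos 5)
Your limit-passage argument is exactly the paper's: the monotone, $V$-bounded sequence $v_n=\mathbf{m}(R_n)$ converges strongly in $H$ and weakly in $V$ to some $y^\ast$, hypothesis (i) gives $T(v_n)\to T(y^\ast)$, hypothesis (iii) gives $v_n=R_n(v_n)\to T(y^\ast)$ strongly in $V$, and uniqueness of weak limits plus minimality of $\mathbf{m}(T)$ identify $y^\ast=\mathbf{m}(T)$. The genuine gap is in how you justify the order relations $\mathbf{m}(R_n)\le\mathbf{m}(R_{n+1})\le\mathbf{m}(T)$, on which everything else rests. Both of your justifications invoke the principle that a sub-solution of an increasing map is comparable to its minimal fixed point on $[\underline y,\overline y]$ (first that it dominates it, then, after your ``or rather'', that it is dominated by it), and that principle is false: for $T=\mathrm{id}$ on $[0,1]$ the minimal fixed point is $0$, yet $1/2$ is a sub-solution, so a sub-solution need not lie below $\mathbf{m}(T)$. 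Likewise, applying Theorem \ref{thm:BirkhoffTartar} on $[\mathbf{m}(R_n),\overline y]$ only produces the minimal fixed point of $R_{n+1}$ \emph{on that smaller interval}; identifying it with $\mathbf{m}(R_{n+1})$, the minimal fixed point on $[\underline y,\overline y]$, presupposes $\mathbf{m}(R_n)\le\mathbf{m}(R_{n+1})$, which is what you are trying to prove.

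The repair is the argument the paper actually uses (and which you name but do not carry out): by the Birkhoff--Tartar construction, $\mathbf{m}(R_n)$ is not an arbitrary sub-solution but the greatest element of $Z(R_n)=\{x\in X(R_n):x\le y\ \text{for all } y\in Y(R_n)\}$, i.e.\ it lies below \emph{every} super-solution of $R_n$ in $[\underline y,\overline y]$. Since $R_n\le R_{n+1}\le T$ on $[\underline y,\overline y]$, every super-solution of $R_{n+1}$ (resp.\ of $T$) is a super-solution of $R_n$, so $Y(T)\subset Y(R_{n+1})\subset Y(R_n)$ and $X(R_n)\subset X(R_{n+1})\subset X(T)$, whence $Z(R_n)\subset Z(R_{n+1})\subset Z(T)$ and $\mathbf{m}(R_n)\le\mathbf{m}(R_{n+1})\le\mathbf{m}(T)$, because each minimal fixed point dominates its own $Z$-set. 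The mirror-image argument with $\tilde Z$ and the reversed order is needed to get $\mathbf{M}(T)\le\mathbf{M}(U_{n+1})\le\mathbf{M}(U_n)$ for your ``symmetric'' half. Your remaining worry about the interval bookkeeping is harmless: evaluating (ii) at $v=\underline y$ and $v=\overline y$ gives $\underline y\le R_n(\underline y)\le T(\underline y)$ and $T(\overline y)\le U_n(\overline y)\le\overline y$ directly, so $\underline y,\overline y$ are common sub- and super-solutions of all the maps.
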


\begin{proof}
First note that since $\underline{y}\leq R_n(v)\leq T (v)\leq U_n(v) \leq \overline{y}$, the operators $\mathbf{m}$ and $\mathbf{M}$ are well defined  on $T, R_n$ and $U_n$ for each $n\in \mathbb{N}$ since each of these maps is increasing with the same sub- and supersolutions. Consider the sets
\begin{align*}
    &X(T)=\{x\in H: x\in [\underline{y}, \overline{y}] \text{ and } x\leq T(x)\}, \\
        &Y(T)=\{x\in H: x\in [\underline{y}, \overline{y}] \text{ and } x\geq T(x)\},\\
            &Z(T)=\{x\in X(T): x\leq y \text { for all } y\in Y(T)\};
\end{align*}
and similarly for each $R_n$ and $U_n$, $n\in\mathbb{N}$.

Since $R_n(v)\leq R_{n+1}(v)\leq T(v)$ for all $v\in [\underline{y}, \overline{y}]$ it follows that
\begin{equation}\label{eq:IneqSets1}
    X(R_n)\subset X(T) \text{ and } Y(T)\subset Y(R_n), \text { and hence } Z(R_n)\subset Z(T),
\end{equation}
and also
\begin{equation}\label{eq:IneqSets2}
    X(R_n)\subset X(R_{n+1}) \text{ and } Y(R_{n+1})\subset Y(R_n), \text { and hence } Z(R_n)\subset Z(R_{n+1}).
\end{equation}
Clearly $Z(R_n)$ and $Z(T)$  are not empty, since $\underline{y}$ belongs to both of them. Following the proof of Tartar's Theorem (compare {\cite[Proposition 2, Ch. 15, \S 15.2]{Aubin1979}}) we observe that $\mathbf{m}(R_n)$ and $\mathbf{m}(T)$ correspond to the maximal elements of $Z(R_n)$ and $Z(T)$, respectively. Consequently, it follows from \eqref{eq:IneqSets1} and \eqref{eq:IneqSets2} that
\begin{equation}\label{RnIneq}
\mathbf{m}(R_n)\leq \mathbf{m}(R_{n+1})\leq\mathbf{m}(T), \quad \forall n\in \mathbb{N}.
\end{equation}
Hence, $\{\mathbf{m}(R_n)\}$ is a monotonically increasing sequence which is bounded from above (for the ordering ``$\leq$''), which implies that $\mathbf{m}(R_n)\to \hat{y}$ in $H$, for some $\hat{y}\in H$.  We also know that the sets of fixed points of the maps are uniformly bounded in $V$. Therefore, we infer $\mathbf{m}(R_{n})\rightharpoonup  \hat{y}$ in $V$, that the sequence is non-decreasing, and hence $T(\mathbf{m}(R_{n}))\rightarrow T(\hat{y})$ in $V$. Since $\mathbf{m}(R_{n})=R_{n}(\mathbf{m}(R_{n}))$ and $$\lim_{j\rightarrow\infty}\|R_{n}(\mathbf{m}(R_{n}))-T(\mathbf{m}(R_{n}))\|_V=0,$$ we have $R_{n}(\mathbf{m}(R_{n}))\rightarrow T(\hat{y})$. Therefore, $\mathbf{m}(R_{n})\rightarrow T(\hat{y})$, but since $\mathbf{m}(R_{n})\rightharpoonup  \hat{y}$ it follows that $\mathbf{m}(R_{n})\rightarrow \hat{y}$ in $V$, where $\hat{y}$ is a fixed point of $T$. 

Since $\mathbf{m}(R_{n})\leq \mathbf{m}(T)$ for all $n$, we have $\hat{y}\leq \mathbf{m}(T)$. However,  $\mathbf{m}(T)$ is the minimal fixed point of $T$, and therefore $\hat{y}= \mathbf{m}(T)$. Summarizing we have
\begin{equation*}
\mathbf{m}(R_n)\rightarrow \mathbf{m}(T) \text{ in } V \quad\text{ and }\quad \mathbf{m}(R_n)\uparrow \mathbf{m}(T) \:\text{ in  } H.  
\end{equation*}

Now we consider the upper bound. We define
\begin{equation*}
    \tilde{Z}(T):=\{y\in Y(T): x\leq y \text { for all } x\in X(T)\},
\end{equation*}
and analogously for $U_n$, $n\in\mathbb{N}$.
Since $T(v)\leq U_{n+1}(v)\leq U_n(v)$ for all $v\in [\underline{y}, \overline{y}]$ and $n\in\mathbb{N}$ it follows that
\begin{equation}\label{eq:IneqSets3}
    X(T)\subset X(U_n) \text{ and } Y(U_n)\subset Y(T), \text { and hence } \tilde{Z}(U_n)\subset \tilde{Z}(T),
\end{equation}
and also 
\begin{equation}\label{eq:IneqSets4}
    X(U_{n+1})\subset X(U_n) \text{ and } Y(U_n)\subset Y(U_{n+1}) \text { hence } \tilde{Z}(U_n)\subset \tilde{Z}(U_{n+1}).
\end{equation}

Clearly, $\overline{y}\in \tilde{Z}(T), \tilde{Z}(U_n)$ and then, as before, we apply Zorn's Lemma (with the reverse order) to find minimal elements $\mathbf{M}(T)$ and $\mathbf{M}(U_n)$, such that  $$\mathbf{M}(T)\leq \mathbf{M}(U_{n+1})\leq \mathbf{M}(U_n)\leq \overline{y}.$$ 
Then, $\{-\mathbf{M}(U_n)\}$ is a monotonically increasing sequence which is bounded above for the ordering ``$\leq$''. This implies that $\mathbf{M}(R_n)\to \check{y}$ in $H$ for some $\check{y}\in H$. Since $\{\mathbf{M}(U_n)\}$ is also uniformly bounded in $V$, we have $\mathbf{M}(U_{n})\rightharpoonup \check{y}$ and this latter sequence is also non-increasing. Therefore, we infer $T(\mathbf{M}(U_{n}))\rightarrow T(\check{y})$. Since $\mathbf{M}(U_{n})=U_{n}(\mathbf{M}(U_{n}))$  and $$\lim_{j\rightarrow\infty}\|U_{n}(\mathbf{M}(U_{n}))-T(\mathbf{M}(U_{n})\|_V=0,$$ we get $U_{n}(\mathbf{M}(U_{n}))\rightarrow T(\check{y})$ and $\mathbf{M}(U_{n})\rightarrow \check{y}$, both in $V$, where $\check{y}=T(\check{y})$.
 
As in the previous case, since  $\mathbf{M}(T)\leq \mathbf{M}(U_{n})$, we have that $\mathbf{M}(T)\leq \check{y}$. However, $\mathbf{M}(T)$ is the maximal fixed point to $T$, and therefore $\check{y}= \mathbf{M}(T)$. Hence, we have
\begin{equation*}
\mathbf{M}(U_n)\rightarrow \mathbf{M}(T) \text{ in } V \quad\text{ and }\quad \mathbf{M}(U_n)\downarrow \mathbf{M}(T) \text{ in } H, 
\end{equation*}
which ends the proof.
\end{proof}

This result is sharp regarding lower and upper approximations, as it is generally  not possible to obtain  $\mathbf{M}(R_n)\rightarrow \mathbf{M}(T)$ and $\mathbf{m}(U_n)\rightarrow \mathbf{m}(T)$. We illustrate this fact by means of the following one dimensional example.
\begin{ex}\label{ex:Counterexample}
 Let $T:[0,1]\rightarrow [0,1]$ be defined as
\begin{equation*}
    T(v)=\left\{
           \begin{array}{ll}
             a, & \hbox{$0\leq v <a$;} \\
             v, & \hbox{$a\leq v < b$;} \\
             b, & \hbox{$b\leq v \leq 1$ .}
           \end{array}
         \right.
\end{equation*}
with $0<a<b<1$ and where $\mathbf{m}(T)=a$ and $\mathbf{M}(T)=b$ and
\begin{align*}
    R_n(v)&=\left\{
           \begin{array}{ll}
             a, & \hbox{$0\leq v <\frac{1}{n}$,} \\
             T(v-\frac{1}{n}), & \hbox{$\frac{1}{n}\leq v \leq 1$,}
           \end{array}
         \right.
& U_n(v)=\left\{
           \begin{array}{ll}
             T(v+\frac{1}{n}), & \hbox{$0\leq v <1-\frac{1}{n}$,} \\
             b, & \hbox{$1-\frac{1}{n}\leq v \leq 1$ .}
           \end{array}
         \right.
\end{align*}
Suppose that $n> N$ such that $\frac{1}{N}\leq a$ and $b\leq1-\frac{1}{N}$. Then, all the assumptions of the previous theorem hold, but $\mathbf{m}(R_n)=\mathbf{M}(R_n)=a$ and $\mathbf{m}(U_n)=\mathbf{M}(U_n)=b$ and hence $a=\mathbf{M}(R_n)\rightarrow \mathbf{M}(T)=b$ and $b=\mathbf{m}(U_n)\rightarrow \mathbf{m}(T)=a$ only hold for $a=b$, a contradiction.
\end{ex}

Although, as observed in the previous example, a general approximation theorem (under the hypotheses of Theorem \ref{thm:approximationMinMax}) for minimal and maximal fixed points seems elusive, we establish such a result for the specific case of the QVIs of interest. In order to achieve this, we first determine an equivalent definition of $\mathbf{m}$ and $\mathbf{M}$ but from slightly different means as in the Birkhoff-Tartar Theorem (see {\cite[Proposition 2, Ch. 15, \S 15.2]{Aubin1979}}). 

\begin{lem}
Let $T:H\rightarrow H$ be an increasing map with sub-solution $\underline{y}$ and super-solution $\overline{y}$  such that $\underline{y}\leq \overline{y}$. Then $\mathbf{m}(T)$, the maximal element of $Z(T)$, can also be defined as the maximal element of the set $Z^\bullet(T)$, which is defined as follows
\begin{align*}
    &X(T)=\{x\in H: x\in [\underline{y}, \overline{y}] \text{ and } x\leq T(x) \}, \\
        &Y^\bullet (T)=\{x\in H: x\in [\underline{y}, +\infty) \text{ and } x\geq T(x)\},\\
            &Z^\bullet(T)=\{x\in X(T): x\leq y \text { for all } y\in Y^\bullet(T)\}.
\end{align*}
Similarly, $\mathbf{M}(T)$, the minimal element of $\tilde{Z}(T)$, can also be defined as the minimal element of the set $\tilde{Z}^\bullet(T)$, defined as
\begin{align*}
    &X^\bullet(T)=\{x\in H: x\in (-\infty, \overline{y}] \text{ and } x\leq T(x) \}, \\
        &Y (T)=\{x\in H: x\in [\underline{y}, \overline{y}] \text{ and } x\geq T(x)\},\\
            &\tilde{Z}^\bullet(T)=\{y\in Y(T): x\leq y \text { for all } x\in X^\bullet(T)\}.
\end{align*}
\end{lem}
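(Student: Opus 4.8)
The plan is to prove the two claimed identities by a short comparison of the auxiliary sets, together with one lattice-truncation trick in the ordered Hilbert space $H$. For the statement about $\mathbf{m}(T)$, I would first record the cheap inclusion: since $[\underline{y},\overline{y}]\subset[\underline{y},+\infty)$ we have $Y(T)\subset Y^\bullet(T)$, so the condition ``$x\le y$ for all $y\in Y^\bullet(T)$'' defining $Z^\bullet(T)$ is more restrictive than the one defining $Z(T)$; hence $Z^\bullet(T)\subset Z(T)$. Consequently any maximal element of $Z^\bullet(T)$, if it exists, is dominated by $\mathbf{m}(T)=\max Z(T)$. So the whole content is to show $\mathbf{m}(T)\in Z^\bullet(T)$, and then maximality transfers automatically to the smaller set.

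The main step is exactly that membership. By the Birkhoff--Tartar Theorem, $\mathbf{m}(T)$ is a fixed point of $T$ lying in $[\underline{y},\overline{y}]$, hence $\mathbf{m}(T)\in X(T)$; it remains to verify $\mathbf{m}(T)\le y$ for every $y\in Y^\bullet(T)$. Fix such a $y$, so $y\ge\underline{y}$ and $y\ge T(y)$, and set $\tilde{y}:=\inf(y,\overline{y})$. I would check that $\tilde{y}\in Y(T)$: it lies in $[\underline{y},\overline{y}]$ because $\inf(y,\overline{y})\le\overline{y}$ trivially, while $\underline{y}$ is a lower bound of $\{y,\overline{y}\}$ (using $\underline{y}\le y$ and the hypothesis $\underline{y}\le\overline{y}$) so $\underline{y}\le\inf(y,\overline{y})$; and $\tilde{y}\ge T(\tilde{y})$ because monotonicity of $T$ gives $T(\tilde{y})\le T(y)\le y$ and $T(\tilde{y})\le T(\overline{y})\le\overline{y}$ (here using that $\overline{y}$ is a super-solution), so $T(\tilde{y})$ is a lower bound of $\{y,\overline{y}\}$ and therefore $T(\tilde{y})\le\inf(y,\overline{y})=\tilde{y}$. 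Since $\tilde{y}\in Y(T)$ and $\mathbf{m}(T)\in Z(T)$, we get $\mathbf{m}(T)\le\tilde{y}\le y$. As $y\in Y^\bullet(T)$ was arbitrary, $\mathbf{m}(T)\in Z^\bullet(T)$, and combined with the first paragraph, $\mathbf{m}(T)=\max Z^\bullet(T)$.

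For $\mathbf{M}(T)$ I would run the order-reversing mirror of the same argument. Now $[{-\infty},\overline{y}]\supset[\underline{y},\overline{y}]$ gives $X(T)\subset X^\bullet(T)$, so $\tilde{Z}^\bullet(T)\subset\tilde{Z}(T)$ and any minimal element of $\tilde{Z}^\bullet(T)$ dominates $\mathbf{M}(T)=\min\tilde{Z}(T)$. To see $\mathbf{M}(T)\in\tilde{Z}^\bullet(T)$, take $x\in X^\bullet(T)$ (so $x\le\overline{y}$, $x\le T(x)$), set $\hat{x}:=\sup(x,\underline{y})$, and check $\hat{x}\in X(T)$: it lies in $[\underline{y},\overline{y}]$ since $\hat{x}\ge\underline{y}$ trivially and $\overline{y}$ is an upper bound of $\{x,\underline{y}\}$, and $\hat{x}\le T(\hat{x})$ since $T(\hat{x})\ge T(x)\ge x$ and $T(\hat{x})\ge T(\underline{y})\ge\underline{y}$ (sub-solution), so $T(\hat{x})$ is an upper bound of $\{x,\underline{y}\}$ and thus $T(\hat{x})\ge\sup(x,\underline{y})=\hat{x}$. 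Because $\mathbf{M}(T)\in\tilde{Z}(T)$ dominates all of $X(T)$, we get $x\le\hat{x}\le\mathbf{M}(T)$, so $\mathbf{M}(T)\in\tilde{Z}^\bullet(T)$ and hence $\mathbf{M}(T)=\min\tilde{Z}^\bullet(T)$.

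I do not expect a genuine obstacle here; the one point demanding care is the verification that the truncations $\inf(y,\overline{y})$ and $\sup(x,\underline{y})$ stay inside $Y(T)$ and $X(T)$ respectively — this is precisely where both hypotheses (monotonicity of $T$ and the sub-/super-solution property of $\underline{y}$ and $\overline{y}$) enter, in tandem with the universal property of $\inf$ and $\sup$ in the ordered space $H$. I would also be careful that ``maximal/minimal element'' is understood in the same Zorn-type sense as in the Birkhoff--Tartar Theorem, so that membership in the smaller set together with the set inclusion genuinely transfers the extremal property.
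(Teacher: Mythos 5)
Your proof is correct in the settings the paper actually works in, but it takes a genuinely different route from the paper's own argument. The paper never verifies directly that $\mathbf{m}(T)\in Z^{\bullet}(T)$: it applies Zorn's lemma to $Z^{\bullet}(T)$ (nonempty, order-bounded) to extract a maximal element $x^{*}$, reruns the Birkhoff--Tartar mechanism to show $T(x^{*})\in Z^{\bullet}(T)$ and hence $x^{*}=T(x^{*})$, and then identifies $x^{*}=\mathbf{m}(T)$ by sandwiching: $x^{*}\le\mathbf{m}(T)$ from $Z^{\bullet}(T)\subset Z(T)$, and $\mathbf{m}(T)\le x^{*}$ from minimality of $\mathbf{m}(T)$ among fixed points; the $\mathbf{M}$-case is mirrored with the reversed order. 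You instead prove membership of $\mathbf{m}(T)$ in $Z^{\bullet}(T)$ directly via the truncation $\tilde{y}=\inf(y,\overline{y})$, showing that every $y\in Y^{\bullet}(T)$ dominates an element of $Y(T)$ and hence dominates $\mathbf{m}(T)$; this is shorter, avoids repeating the Zorn/fixed-point argument, and even yields that $\mathbf{m}(T)$ is the greatest element of $Z^{\bullet}(T)$ (likewise $\mathbf{M}(T)$ the least element of $\tilde{Z}^{\bullet}(T)$). The one point to flag: your truncation step relies on the universal (greatest-lower-bound, resp.\ least-upper-bound) property of the binary $\inf$ and $\sup$, i.e.\ that the order induced by $H^{+}$ is a lattice order, both when you conclude $\underline{y}\le\inf(y,\overline{y})$ and $T(\tilde{y})\le\inf(y,\overline{y})$, and in the mirrored $\sup(x,\underline{y})$ step. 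This holds for the paper's prototypical and application settings ($L^{2}$-type spaces with the a.e.\ ordering), but it is not implied by the abstract assumption of a closed self-dual cone alone (such cones need not induce lattice orders, and the formulas $\inf(x,y)=x-(x-y)^{+}$, $\sup(x,y)=x+(y-x)^{+}$ then only produce particular lower/upper bounds). The paper's Zorn-based proof uses no binary lattice operations and so covers that full abstract generality; to use your argument at the same level of generality you would need to add the lattice property as an explicit hypothesis, or fall back on the paper's fixed-point argument for that step.
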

\begin{proof}
We begin by noting that $\mathbf{m}(T)$ is the maximal element of $Z(T)$, and $\mathbf{M}(T)$ is the minimal element of $\tilde{Z}(T)$, as shown in the proof of the Birkhoff-Tartar Theorem (see {\cite[Proposition 2, Ch. 15, \S 15.2]{Aubin1979}}).

Since $\underline{y}\in Z^\bullet(T)$ and $Z^\bullet(T)\subset [\underline{y}, \overline{y}]$, $Z^\bullet(T)$ is nonempty and bounded in $H$, we may apply Zorn's Lemma (see {\cite[Proposition 1, Ch. 15, \S 15.2]{Aubin1979}}). Let $x^*\in Z^\bullet(T)$ be {a} maximal element of $Z^\bullet(T)$. It follows from $Y(T)\subset Y^\bullet (T)$ that $Z^\bullet(T)\subset Z(T)$. Therefore
\begin{equation*}
   x^*\leq \mathbf{m}(T),
\end{equation*}
where $\mathbf{m}(T)$ is the maximal element of $Z(T)$ and the minimum fixed point of $T$ in $[\underline{y}, \overline{y}]$.

Since $x^*\in Z^\bullet(T)$, it follows by definition that $x^*\in X(T)$. Hence, we have $\underline{y}\leq x^*\leq \overline{y}$ and $x^*\leq T(x^*)$. Also, since $T$ is an increasing map, it holds that $\underline{y}\leq T(\underline{y})\leq T(x^*)\leq T(\overline{y})\leq \overline{y}$ and $T(x^*)\leq T(T(x^*))$, i.e., $T(x^*)\in X(T)$. Furthermore, if $y\in Y^\bullet (T)$, then $x^*\leq y$. Hence, $T(x^*)\leq T(y)\leq y$, i.e., $T(x^*)\in  Z^\bullet(T)$. However, $x^*\in Z^\bullet(T)$ is maximal, i.e., $T(x^*)\leq x^*$. Consequently, $x^*=T(x^*)$ and $x^*\in [\underline{y}, \overline{y}] $. Finally, $\mathbf{m}(T)$ is the minimal fixed point of $T$ in $ [\underline{y}, \overline{y}]$ so that
\begin{equation*}
    \mathbf{m}(T)\leq x^*.
\end{equation*}

{Hence from the above $\mathbf{m}(T)= x^*$}. Noting that $\tilde{Z}^\bullet(T)\subset [\underline{y}, \overline{y}]$ and $ \overline{y}\in \tilde{Z}^\bullet(T)$, we can once again apply Zorn's Lemma (with the reversed order). Let $x^*$ be {a} minimal element of $ \tilde{Z}^\bullet(T)$.  We have that $X(T)\subset X^\bullet(T)$ which implies $\tilde{Z}^\bullet(T)\subset \tilde{Z}(T)$. Therefore, it holds that
\begin{equation*}
    \mathbf{M}(T)\leq x^*,
\end{equation*}
where $\mathbf{M}(T)$ is the minimum element of $\tilde{Z}(T)$ and the maximum fixed point of $T$ in $[\underline{y}, \overline{y}]$.

Since $x^*\in \tilde{Z}^\bullet(T)$, we have by definition that $x^*\in Y(T)$, i.e., $\underline{y}\leq x^*\leq \overline{y}$ and $T(x^*)\leq x^*$. Furthermore, the map $T$ is increasing and therefore $\underline{y}\leq T(\underline{y})\leq T(x^*)\leq T(\overline{y})\leq \overline{y}$ and $T(T(x^*))\leq T(x^*)$, i.e., $T(x^*)\in Y(T)$.

For an arbitrary $x\in X^\bullet(T)$, we have $x\leq x^*$ and $x\leq T(x)\leq T(x^*)$, i.e., $T(x^*)\in  \tilde{Z}^\bullet(T)$. As $x^*$ was the minimal element of $\tilde{Z}^\bullet(T)$, it follows that $x^*\leq T(x^*)$, yielding $T(x^*)=x^*$.  However,  $\mathbf{M}(T)$ is the maximal fixed point of $T$ on $[\underline{y}, \overline{y}]$, so that
\begin{equation*}
    x^*\leq \mathbf{M}(T),
\end{equation*}
{thus, $\mathbf{M}(T)= x^*$}, which completes the proof.
\end{proof}

\section{Monotone perturbations}\label{monotonic}

We now prove a series of lemmas that are instrumental in establishing Theorem \ref{thm:StabilityMinMax} in the subsequent section. The latter is a form of stability result for perturbations of the operators $\mathbf{m}$ and $\mathbf{M}$. More specifically, it turns out that the minimal and maximal solutions of the map $y\mapsto S(f,\Phi(y))$ are stable in the norm of $H$ with respect to perturbations in $L^\infty(\Omega)\hookrightarrow V'$ of the forcing term (under certain assumptions on $\Phi$), i.e., if $\{f_n\}$ is in $L_\nu^\infty(\Omega)$ and $f_n\rightarrow f^*$ in $L^\infty(\Omega)$, then 
\begin{equation*}
\mathbf{m}(f_n)\rightarrow\mathbf{m}(f^*)\quad \text{ and }\quad \mathbf{M}(f_n)\rightarrow\mathbf{M}(f^*)\quad \text{ in } H.
\end{equation*}
The strategy of the proof consists in considering the cases of increasing and decreasing sequences of $\{f_n\}$ separately and then {combining} both cases to obtain the final result. This approach is due to the different nature of these cases as indicated in Theorem \ref{thm:approximationMinMax} and Example \ref{ex:Counterexample}. It can also be corroborated by the different structural hypotheses of Lemma \ref{lemma:Decreasing}, \ref{lemma:Increasing}, \ref{lemma:DecreasingM} and \ref{lemma:IncreasingM}. As expected, stability results associated to one-sided perturbations are more amenable than general ones.

In this section, all sequences of forcing terms $\{f_n\}$ are assumed to satisfy $0\leq f_n\leq F$ for all $n\in \mathbb{N}$ and some $F\in V'$ such $F\geq 0$. Further, we consider the interval $[\underline{y}, \overline{y}]$, with $\underline{y}=0$, and $\overline{y}\in V$ such that
\begin{equation}
\langle A(\overline{y}),v\rangle=\langle F,v\rangle, \quad \forall v\in V.
\end{equation}
For any $f$ with $0\leq f\leq F$, we observe {by \eqref{eq:subsol} and \eqref{eq:supsol}} that 
\begin{equation*}
0\leq S(f,\Phi(0)) \qquad\text{and}\qquad S(f,\Phi(\overline{y}))\leq  S(F,+\infty)=\overline{y}.
\end{equation*}
Hence, we denote by $\mathbf{m}(f)$ and $\mathbf{M}(f)$  the minimal and maximal fixed points of the map $y\mapsto S(f, \Phi(y))=S(f,y)$, respectively, on the interval $[\underline{y}, \overline{y}]=[0, A^{-1}(F)]$. Note that $\mathbf{m}(f)$ and $\mathbf{M}(f)$  are well defined according to Theorem \ref{thm:Tartar}.  

In the following lemma, we start by considering the behavior of $\{\mathbf{m}(f_n)\}$ for non-increasing sequences $\{f_n\}$.

\begin{lem}[\textsc{Non-increasing Sequences of $\mathbf{m}$}]\label{lemma:Decreasing}
Suppose that the following hold true:
\begin{itemize}
  \item[(i)] The sequence $\{f_n\}$ in $L_\nu^\infty(\Omega)$ is non-increasing and $\lim_{n\rightarrow\infty}f_n= f^*$ in $L^\infty(\Omega)$ for some  $f^*\in L_\nu^\infty(\Omega)$.
  \item[(ii)] The upper bound mapping $\Phi$ satisfies  
  \begin{equation*}
\lambda  \Phi(y)\geq \Phi(\lambda  y), \quad\text{ for all } \quad \lambda> 1,\: y\in  H^+,
\end{equation*}
and if $\{v_n\}$ is bounded in $V$ and $v_n\downarrow v$ in $H$, then $\Phi(v_n)\rightarrow \Phi(v)$ in $H$.
\end{itemize}
Then, it follows that
\begin{equation}\label{eq:ConvergenceDecreasings}
    \mathbf{m}(f_n)\downarrow \mathbf{m}(f^*) \text { in } H, \quad \text{and} \quad \mathbf{m}(f_n)\rightarrow \mathbf{m}(f^*) \text { in } V.
\end{equation}
\end{lem}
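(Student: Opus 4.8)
The plan is to apply the abstract approximation result, Theorem~\ref{thm:approximationMinMax}, with $T$ the map $y\mapsto S(f^*,\Phi(y))$ and $R_n$ the map $y\mapsto S(f_n,\Phi(y))$, after first correcting for the fact that the $f_n$ sit \emph{above} $f^*$ (so the corresponding solution operators dominate $T$) rather than below. Concretely, since $\{f_n\}$ is non-increasing with limit $f^*$, Proposition~\ref{Prop:IncreasingSolMap} gives $S(f_{n+1},\Phi(y))\leq S(f_n,\Phi(y))$ and $S(f^*,\Phi(y))\leq S(f_n,\Phi(y))$ for all $y\in[\underline y,\overline y]$, so the natural object is a \emph{decreasing} approximation of $T$ from above, i.e. the $U_n$ role in Theorem~\ref{thm:approximationMinMax} rather than the $R_n$ role. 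However, Theorem~\ref{thm:approximationMinMax} only yields $\mathbf{M}(U_n)\to\mathbf{M}(T)$, not $\mathbf{m}(U_n)\to\mathbf{m}(T)$ — and Example~\ref{ex:Counterexample} shows this cannot be fixed at the abstract level. This is the crux of the lemma and the reason hypothesis~(ii), in particular the scaling condition $\lambda\Phi(y)\geq\Phi(\lambda y)$ for $\lambda>1$, is imposed: it is exactly the extra structure that rescues convergence of the \emph{minimal} fixed point under decreasing perturbations.

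\textbf{Key steps.} First I would record monotonicity: by Proposition~\ref{Prop:IncreasingSolMap}, $f_{n+1}\leq f_n$ gives $\mathbf{m}(f_{n+1})\leq\mathbf{m}(f_n)$ (since the relevant sets $Z(\cdot)$ shrink, exactly as in the proof of Theorem~\ref{thm:approximationMinMax}), and $f^*\leq f_n$ gives $\mathbf{m}(f^*)\leq\mathbf{m}(f_n)$. Hence $\{\mathbf{m}(f_n)\}$ is non-increasing and bounded below by $\mathbf{m}(f^*)$, so $\mathbf{m}(f_n)\downarrow \hat y$ in $H$ for some $\hat y\geq\mathbf{m}(f^*)$, and by the uniform $V$-bound (Theorem~\ref{thm:Tartar} and the a priori estimate from strong monotonicity of $A$, using $0\leq f_n\leq F$) also $\mathbf{m}(f_n)\rightharpoonup\hat y$ in $V$. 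Second, I would pass to the limit in the fixed-point relation $\mathbf{m}(f_n)=S(f_n,\Phi(\mathbf{m}(f_n)))$: since $\mathbf{m}(f_n)\downarrow\hat y$ with a $V$-bound, hypothesis~(ii) gives $\Phi(\mathbf{m}(f_n))\to\Phi(\hat y)$ in $H$, and combined with $f_n\to f^*$ in $L^\infty(\Omega)\hookrightarrow V'$ and continuity/nonexpansiveness of $S(\cdot,\cdot)$ (Lipschitz in $f$ from strong monotonicity of $A$, continuous in $\psi$ by standard VI estimates using T-monotonicity) one gets $S(f_n,\Phi(\mathbf{m}(f_n)))\to S(f^*,\Phi(\hat y))$ in $V$. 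Therefore $\mathbf{m}(f_n)\to\hat y$ strongly in $V$ and $\hat y=S(f^*,\Phi(\hat y))$, i.e. $\hat y$ is a fixed point of $T$ in $[\underline y,\overline y]$, so $\mathbf{m}(f^*)\leq\hat y$. Third — the essential step — I would prove the reverse inequality $\hat y\leq\mathbf{m}(f^*)$ using the scaling hypothesis: the idea is that for $\lambda>1$, $\lambda\mathbf{m}(f^*)$ is a super-solution of the map $y\mapsto S(f_n,\Phi(y))$ for $n$ large (because $S(f_n,\Phi(\lambda\mathbf{m}(f^*)))\leq S(f_n,\lambda\Phi(\mathbf{m}(f^*)))\leq \lambda S(\lambda^{-1}f_n,\Phi(\mathbf{m}(f^*)))$ by monotonicity of $S$ in $\psi$ and order-one homogeneity of $A$, and $\lambda^{-1}f_n\to\lambda^{-1}f^*\leq f^*$, so the right side is $\leq\lambda S(f^*,\Phi(\mathbf{m}(f^*)))=\lambda\mathbf{m}(f^*)$ for $n$ large, using that $f^*\geq\nu>0$ makes the strict inequality $\lambda^{-1}f^*\le f^*$ effective). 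Then $\mathbf{m}(f_n)$, being the minimal fixed point, satisfies $\mathbf{m}(f_n)\leq\lambda\mathbf{m}(f^*)$; letting $n\to\infty$ gives $\hat y\leq\lambda\mathbf{m}(f^*)$, and then $\lambda\downarrow 1$ yields $\hat y\leq\mathbf{m}(f^*)$. Combining, $\hat y=\mathbf{m}(f^*)$, which is \eqref{eq:ConvergenceDecreasings}.

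\textbf{Main obstacle.} The routine parts — the a priori $V$-bound, Lipschitz dependence of $S$ on $(f,\psi)$, monotonicity of the minimal-fixed-point operator — are standard given the hypotheses on $A$ and Proposition~\ref{Prop:IncreasingSolMap}. The delicate point is the third step: showing $\hat y\leq\mathbf{m}(f^*)$. The abstract Theorem~\ref{thm:approximationMinMax} gives nothing here (and Example~\ref{ex:Counterexample} warns us it cannot), so one must genuinely exploit the scaling inequality $\lambda\Phi(y)\ge\Phi(\lambda y)$ together with order-one homogeneity of $A$ to manufacture, for each $\lambda>1$, an explicit super-solution $\lambda\mathbf{m}(f^*)$ of the perturbed maps that is available uniformly for large $n$; the condition $f^*\in L^\infty_\nu(\Omega)$ with $\nu>0$ is what makes the homogeneity trick bite (one needs $\lambda^{-1}f^*$ to be comparable to, and below, $f^*$, and to dominate $f_n$ eventually, which requires room between $0$ and $f^*$). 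Care is also needed that all the sub/super-solution manipulations stay inside $[\underline y,\overline y]$ or are handled via the $Y^\bullet$-variant from the Lemma preceding this section, since $\lambda\mathbf{m}(f^*)$ need not lie in $[0,\overline y]$.
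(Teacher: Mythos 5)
Your overall route coincides with the paper's: monotonicity of $f\mapsto\mathbf{m}(f)$ gives $\mathbf{m}(f_n)\downarrow\hat y$ with a uniform $V$-bound; passing to the limit in $\mathbf{m}(f_n)=S(f_n,\Phi(\mathbf{m}(f_n)))$ identifies $\hat y$ as a fixed point of $y\mapsto S(f^*,\Phi(y))$; and the reverse inequality $\hat y\le\mathbf{m}(f^*)$ is obtained by the same scaling trick the paper uses in its Step 2. Your variant --- fix $\lambda>1$, check that $\lambda\mathbf{m}(f^*)$ is a super-solution of $y\mapsto S(f_n,\Phi(y))$ for $n$ large (valid since $\|f_n-f^*\|_{L^\infty(\Omega)}\le(\lambda-1)\nu$ eventually, using $f^*\ge\nu$, the scaling of $\Phi$ and order-one homogeneity of $A$), invoke the $Y^\bullet$/$Z^\bullet$ characterization to compare the minimal fixed point with a super-solution that may exceed $\overline y$, and then let $n\to\infty$ followed by $\lambda\downarrow 1$ --- is just a reorganization of the paper's choice $\lambda_n=\|f_n/f^*\|_{L^\infty(\Omega)}\downarrow 1$ applied to an arbitrary element of $Y^\bullet(f^*)$; the ingredients and the reason the hypotheses enter are identical.

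The one step you cannot dispatch as cited is the convergence $S(f_n,\Phi(\mathbf{m}(f_n)))\to S(f^*,\Phi(\hat y))$ in $V$: there is no ``standard VI estimate'' giving continuity of $S$ in the obstacle with respect to mere $H$-convergence (the classical estimate of this type needs $L^\infty$-convergence of the obstacles, which hypothesis (ii) is deliberately designed to avoid, and T-monotonicity only yields order comparisons, not norm continuity). The claim is nevertheless true here, but only because of the monotone structure: $\Phi(\mathbf{m}(f_n))$ is non-increasing and converges in $H$, so the constraint sets are nested, $\mathbf{K}(\Phi(\hat y))\subset\mathbf{K}(\Phi(\mathbf{m}(f_{n+1})))\subset\mathbf{K}(\Phi(\mathbf{m}(f_n)))$, and one argues via the a priori bound on $w_n:=S(f_n,\Phi(\mathbf{m}(f_n)))$, its monotone $H$-limit, Minty's lemma (tested on $\mathbf{K}(\Phi(\hat y))$, which sits inside every $\mathbf{K}(\Phi(\mathbf{m}(f_n)))$) to identify the weak limit as $S(f^*,\Phi(\hat y))$, and strong monotonicity of $A$ to upgrade to strong $V$-convergence. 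This Mosco/Minty argument is exactly the content of the paper's Step 1 (following Toyoizumi); with that substitution for your appeal to ``continuity in $\psi$'', your outline is complete and matches the paper's proof.
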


\begin{proof} 
The proof is split into several steps for the sake of clarity.

Step 1: We start by showing: \textit{If a sequence $\{z_n\}$ satisfies $z_n\rightharpoonup z^*$ in $V$, for some $z^*$, and is non-increasing and non-negative: $z_n\geq z_{n+1}\geq 0$ for all $n\in \mathds{N}$, then it holds that $S(f_{n}, \Phi(z_{n}))\rightarrow S(f^*, \Phi(z^*))$ in $V$}. We follow closely the ideas in \cite{Toyoizumi1991} and include the proof here for the sake of completeness. Note first that the non-increasing nature of the sequence implies also that $z_{n}\downarrow z^*$ in $H$.


By our hypothesis on $\Phi$, we have that $\Phi(z_{n})\rightarrow \Phi(z^*)$ in $H$ and $\Phi(z_{n})\geq \Phi(z_{n+1})$, which implies that $\mathbf{K}(\Phi(z_{n}))\supset \mathbf{K}(\Phi(z_{n+1}))$ and $\mathbf{K}(\Phi(z_{n}))\supset \mathbf{K}(\Phi(z^*))$. 


Since $\mathbf{K}(\Phi(z^*))$ is non-empty (note that $z^*\geq 0$ and $\Phi(z^*)\geq 0$), we have $0\in \mathbf{K}(\Phi(z^*))$ and  $0\in \mathbf{K}(\Phi(z_{n}))$ for all $n\in \mathds{N}$. Let $w_n:=S(f
_{n}, \Phi(z_{n}))$ and note that by Proposition \ref{Prop:IncreasingSolMap} the associated sequence is decreasing and bounded from below (in the ordering), so that $w_n\downarrow w^*$ in $H$ for some $w^*\in H$.

By definition $\langle A(w_n)-f_{n},v-w_n\rangle\geq 0$ for all $v\in \mathbf{K}(\Phi(z_{n}))$, and then, using the uniform monotonicity of $A$, we have
\begin{align}\label{eq:bound}
    c\|w_n\|^2_{V}\leq \langle A
    (w_n),w_n\rangle\leq \langle f_{n},w_n\rangle .
\end{align}
Also, $\langle f_{n},w_n\rangle\leq(\|f_{n}\|_{V'}) \|w_n\|_{V}$ and $\|f_{n}\|_{V'}\leq C\|f_{n}\|_{L^\infty(\Omega)}<\infty$. Therefore, $\{w_n\}$ is bounded in $V$, and hence for some subsequence  $w_{n_k}\rightharpoonup w^*$ in $V$ (for the same $w^*$ as before). But since $w_n\downarrow w^*$ in $H$, we get $w_{n}\rightharpoonup w^*$ in $V$.

Since $\Phi(z_{n})\rightarrow \Phi(z^*)$  in $H$ and $w_{n}\leq \Phi(z_{n})$, we conclude
\begin{equation*}
    w^*\leq \Phi(z^*), \quad\text{ i.e., } \quad w^*\in \mathbf{K}(\Phi(z^*)).
\end{equation*}

By Minty's Lemma (see {\cite[Lemma 4.2, Section 4:4]{Rodrigues1987}}) applied to the VI arising from $w_{n}=S(f_{n}, \Phi(z_{n}))$, we obtain
\begin{equation*}
    \langle A(v)-f_{n},v-w_{n}\rangle\geq 0, \quad \forall v\in  \mathbf{K}(\Phi(z_{n})),
\end{equation*}
and in particular for all $v\in \mathbf{K}(\Phi(z^*))\subset \mathbf{K}(\Phi(z_{n}))$. As $f_{n}\rightarrow f^*$ in $L^\infty(\Omega)$ (and hence in $V'$), $(v-w_{n})\rightharpoonup (v-w^*)$ in $V$ and $(v-w_{n})\rightarrow (v-w^*)$ in $H$, we have
\begin{equation*}
    \lim_{k\rightarrow\infty}\langle A(v)-f_{n},v-w_{n}\rangle =\langle A(v)-f^*,v-w^*\rangle\geq 0, \quad \forall v\in \mathbf{K}(\Phi(z^*)).
\end{equation*}
Additionally, since $w^*\in \mathbf{K}(\Phi(z^*))$, Minty's Lemma implies
\begin{equation*}
    \langle A(w^*)-f^*,v-w^*\rangle \geq 0, \quad \forall v\in \mathbf{K}(\Phi(z^*)),
\end{equation*}
i.e., $w^*=S(f^*, \Phi(z^*))$.

Given that $w_{n}\rightharpoonup w^*$ in $V$, {$w_{n}\to  w^*$ in $H$}  and $\|f_{n}\|_{{H}}\leq C  \|f_{n}\|_{L^\infty(\Omega)}<\infty$, by 
\begin{equation*}
c\|w_n-w^*\|^2_V\leq \langle A(w_n)-A(w^*) ,w_n-w^*\rangle\leq \langle {-}f_n+A(w^*) ,w^*-w_n\rangle,
\end{equation*}
we have $w_{n}\rightarrow w^*$ in $V$. That is,
\begin{equation}\label{eq:StrongConvSolMap}
     S(f_{n}, \Phi(z_{n}))\rightarrow S(f^*, \Phi(z^*)) \text { in } V.
\end{equation}

Before we continue with the next step of the proof, we define for $f\in V'$ the set-valued mappings
\begin{align*}
    &X(f)=\{x\in H: \underline{y}\leq x\leq \overline{y} \text{ and } x\leq S(f, \Phi(x))\}, \\
        &Y^\bullet(f)=\{x\in H: \underline{y}\leq x  \text{ and } x\geq S(f, \Phi(x))\},\\
            &Z^\bullet(f)=\{x\in X(f): x\leq y \text { for all } y\in Y^\bullet(f)\}.
\end{align*}

Step 2: \textit{Let $\{z_n\}$ be the sequence of Step 1, i.e., $z_n\rightharpoonup z^*$ in $V$ that is also non-increasing in the sense $z_n\geq z_{n+1}\geq 0$ for all $n$. If $z_n\in Z^\bullet(f_n)$, then  $z^*\in Z^\bullet(f^*)$.}

Since $f_n\in L_\nu^\infty(\Omega)$ with $f_n\geq f_{n+1}$ for all $n\in\mathbb{N}$ and $\lim_{n\rightarrow\infty}f_n= f^*\geq \nu>0$ in $L^\infty(\Omega)$, we have that $f^*\leq f_n$ for all $n\in \mathbb{N}$. Hence, $S(f^*, \Phi(x))\leq S(f_n, \Phi(x))$ for all $n\in {\mathbb{N}}$ and we obtain the  inequalities
\begin{equation}\label{eq:Inclusions}
    X(f^*)\subset X(f_n) \text{ and } Y^\bullet(f_n)\subset Y^\bullet(f^*), \text { and hence } Z^\bullet(f^*)\subset Z^\bullet(f_n).
\end{equation}

Let $z_{n}\in Z^\bullet(f_{n})$, then $z_{n}\in X(f_{n})$, i.e., $\underline{y}\leq z_{n}\leq\overline{y}$ and  $z_{n}\leq S(f_{n}, \Phi(z_{n}))$. Therefore, we have { by Step 1} 
\begin{equation}\label{eq:zisinZ}
   \underline{y}\leq z^*\leq\overline{y} \:\text{ and }\: z^*\leq S(f^*, \Phi(z^*)), \text{ and hence } z^*\in X(f^*).
\end{equation}

Let $y\in Y^\bullet(f^*)$ be arbitrary and consider $y_n:=\lambda_n y$, with  $\lambda_n:=\|f_{n}/f\|_{L^\infty(\Omega)}$. Since $\lambda_n\downarrow 1$ (recall $f_n\rightarrow f$ in $L^\infty(\Omega)$ and $f_n,f\in L_\nu^\infty(\Omega)$ for all $n\in\mathbb{N}$), we infer  $\underline{y}\leq\lambda_n\underline{y}\leq\lambda_ny = y_n$. Also, $\lambda_n  f\geq  f_{n}$ and by the structural assumption over $\Phi$, we have $\lambda_n  \Phi(y)\geq \Phi(\lambda_n  y)$.  Furthermore, we obtain the following chain of inequalities
\begin{align*}
    \lambda_n  y&\geq \lambda_n S(f, \Phi( y))= S( \lambda_n f, \lambda_n\Phi( y))\geq  S(f_n, \Phi(\lambda_n   y)),
\end{align*}
{where we have used that $A$ is homogenous of order one. Thus,} $y_n\in  Y^\bullet(f_n)$ and $y_n\rightarrow y$ in $L^\infty(\Omega)$.

Now, we have that $z_n\in Z^\bullet (f_n)$ and $z_n\rightarrow z^*$ in $H$ and $z^*\in X(f^*)$, and for each $n\in \mathds{N}$ we have $z_n\leq \tilde{y}$ for all $\tilde{y}\in Y^\bullet(f_n)$. Choosing $\tilde{y}=\lambda_n  y$ as in the previous paragraph with $y\in Y^\bullet (f^*)$ arbitrary, we have that $z_n\leq\lambda_n y$. Henceforth, $z^*\leq y$ for all $y\in Y^\bullet(f^*)$, i.e., $z^*\in Z^\bullet(f^*)$.


Step 3.  The minimal solutions $\mathbf{m}(f_n)$ and $\mathbf{m}(f^*)$ are well defined as the maximal elements of $Z^\bullet(f_n)$ and $Z^\bullet(f^*)$, respectively. It follows immediately from \eqref{eq:Inclusions} that $\mathbf{m}(f^*)\leq \mathbf{m}(f_n)$, and by the same argument used to derive  \eqref{eq:Inclusions}, we have that $0\leq \mathbf{m}(f_{n+1})\leq \mathbf{m}(f_n)$. Denote $z_n=\mathbf{m}(f_n)$,  since $z_n=S(f_n, \Phi(z_n))$ and $0\in \mathbf{K}(\Phi(z_n))$, a standard monotonicity argument gives $\|z_n\|_{V}\leq M<\infty$. Hence $z_n$ is bounded in $V$, non-increasing and bounded below in order, and $z_n\in Z^\bullet(f_n)$. The monotone behaviour in addition to the boundedness implies that $z_n \rightharpoonup z^*$ in $V$, by Step 2 we have that $z^*\in Z^\bullet(f^*)$. Since $z_n=S(f_{n}, \Phi(z_n))$, by Step 1, we have that $z_n\rightarrow z^*$ in $V$, $z^*=S(f^*, \Phi(z^*))$, i.e., $z^*$ is a fixed point of the map $z\mapsto S(f^*, \Phi(z))$ and hence $\mathbf{m}(f^*)\leq z^*$. From the definition of $Z^\bullet(f^*)$ we infer that  $z^*\leq y$ for all $y\in Y^\bullet(f^*)$, and we readily observe $\mathbf{m}(f^*)\in Y^\bullet(f^*)$, so that $z^*\leq \mathbf{m}(f^*)$, i.e., $\mathbf{m}(f^*)= z^*$.
%
\end{proof}

A fundamental step in the previous lemma utilizes that 
\begin{equation*}
S(f_{n}, \Phi(z_{n}))\rightarrow S(f^*, \Phi(z^*)), \quad \text{in} \quad V
\end{equation*}
when $f_n\to {f^*}$ in $V'$. A sufficient condition for this to hold true is related to the Mosco convergence (see \cite{Rodrigues1987}) of $\{\mathbf{K}(\Phi(z_n))\}$ towards $\mathbf{K}(\Phi(z^*))$:

\begin{definition}[\textsc{Mosco convergence}]\label{definition:MoscoConvergence}
Let $\mathbf{K}$ and $\mathbf{K}_n$, for each $n\in\mathbb{N}$, be non-empty, closed and convex subsets of $V$. {Then} the sequence \textit{$\{\mathbf{K}_n\}$ {is said to} converge to $\mathbf{K}$ in the sense of Mosco} as $n\rightarrow\infty$, {denoted} by {$\mathbf{K}_n\xrightarrow{\:\mathrm{M}\:}\mathbf{K},$} if the following two conditions {are fulfilled}:
\begin{itemize}
  \item[(i)] For each $w\in \mathbf{K}$, there exists $\{w_{n'}\}$ such that $w_{n'}\in \mathbf{K}_{n'}$ for $n'\in \mathbb{N}'\subset \mathbb{N}$ and $w_{n'}\rightarrow w$ in $V$.
  \item[(ii)] If $w_n\in \mathbf{K}_n$ and $w_n\rightharpoonup w$ in $V$ along a subsequence, then $w\in \mathbf{K}$.
\end{itemize}
\end{definition}

Mosco convergence of unilaterally constrained sets is equivalent (in the case when the obstacles are quasi-continuous and $V$ a certain Sobolev space) to convergence of the obstacles in the sense of the capacity (which might be cumbersome to prove beyond rather simple examples). It is also well-known that the convergence of the obstacles in the sense of $L^\infty(\Omega)$ is a sufficient condition for Mosco convergence (in most applications), although this might be rather a strong assumption in some cases. In the previous case, we are able to avoid that strong assumption rather elegantly by assuming only the $H$ convergence of the obstacles. In the next case, for non-increasing sequences, the  $L^\infty(\Omega)$ or $V$ convergence {can} be avoided by using a correction of an argument of Toyoizumi (see \cite{Toyoizumi1991}) by using geometrical considerations of the obstacles. For this matter, we consider the following assumption.

\begin{assumption}\label{PhiAss}
If $v_n\rightharpoonup v$ in $V$, then $\Phi$ satisfies one of the following:
\begin{itemize}
  \item[(a)] $\Phi(v_n)\rightarrow \Phi(v)$ in $L_\nu^\infty(\Omega)$, or $\Phi(v_n)\rightarrow \Phi(v)$ in $V$.
 \item[(b)] $\Phi(v_n)\rightarrow \Phi(v)$ in $H$ and if $v\in V\cap H^+$, then $\Phi(v)\in V$ and $\mathcal{Q} \Phi(v)\geq 0$ in $V$, for some strongly monotone $\mathcal{Q}\in \mathscr{L}(V,V')$, such that $\langle \mathcal{Q}v^-, v^+\rangle\leq 0$ for all $v\in V$.
\end{itemize}  
\end{assumption}

With the above definition in mind we are now in the position to provide the stability result for minimal solutions and for non-decreasing sequences of forcing terms.

\begin{lem}[\textsc{Non-decreasing Sequences for $\mathbf{m}$}]\label{lemma:Increasing}
Suppose the following: 
\begin{itemize}
  \item[(i)] The sequence $\{f_n\}$ in $H^+$ is non-decreasing and $\lim_{n\rightarrow\infty}f_n= f^*$ in $H$ for some $f^*\in H$.
  \item[(ii)] The upper bound mapping $\Phi$ satisfies Assumption \ref{PhiAss}.
\end{itemize}
Then, the following hold true:
\begin{equation*}
    \mathbf{m}(f_n)\uparrow \mathbf{m}(f^*)\text { in } H, \quad \text{and} \quad \mathbf{m}(f_n)\rightarrow \mathbf{m}(f^*) \text { in } V.
\end{equation*}
\end{lem}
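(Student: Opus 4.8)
\emph{Approach.} The plan is to follow the architecture of the proof of Lemma~\ref{lemma:Decreasing}, exploiting that a non-decreasing, convergent sequence $\{f_n\}$ automatically produces a non-decreasing sequence $\{\mathbf{m}(f_n)\}$ bounded above by $\mathbf{m}(f^*)$; consequently no scaling/correction at the level of the fixed-point sets (as was needed in the non-increasing case) is required, and the only genuine analytic input is the continuity of the VI solution map $S$ along monotone iterates. That continuity is exactly where Assumption~\ref{PhiAss} enters, through Mosco convergence of the moving constraint sets.

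\emph{Step 1 (continuity of $S$ along non-decreasing iterates).} First I would establish: if $\{z_n\}\subset V\cap H^+$ is bounded in $V$, non-decreasing in the order of $H$, and $z_n\rightharpoonup z^*$ in $V$, then $z_n\uparrow z^*$ in $H$ (a non-decreasing order-bounded sequence converges in $H$, and its $H$-limit must coincide with the weak $V$-limit $z^*$), $z^*\in V\cap H^+$, and $S(f_n,\Phi(z_n))\to S(f^*,\Phi(z^*))$ in $V$. Since $\Phi$ is increasing on $[0,+\infty)$ and $z_n\le z_{n+1}\le z^*$, we get $\Phi(z_n)\le\Phi(z_{n+1})\le\Phi(z^*)$ and hence the nested inclusions $\mathbf{K}(\Phi(z_n))\subset\mathbf{K}(\Phi(z_{n+1}))\subset\mathbf{K}(\Phi(z^*))$, while Assumption~\ref{PhiAss} gives $\Phi(z_n)\to\Phi(z^*)$ in $H$. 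The core of this step is the Mosco convergence $\mathbf{K}(\Phi(z_n))\xrightarrow{\:\mathrm{M}\:}\mathbf{K}(\Phi(z^*))$: condition (ii) of Definition~\ref{definition:MoscoConvergence} is trivial because each $\mathbf{K}(\Phi(z_n))$ sits inside the weakly closed convex set $\mathbf{K}(\Phi(z^*))$; condition (i) — producing, for $w\le\Phi(z^*)$, a recovery sequence $w_n\le\Phi(z_n)$ with $w_n\to w$ in $V$ — is handled by the two alternatives in Assumption~\ref{PhiAss}. If $\Phi(z_n)\to\Phi(z^*)$ in $V$, take $w_n:=\inf(w,\Phi(z_n))=w-(w-\Phi(z_n))^+$, which lies in $\mathbf{K}(\Phi(z_n))$ and converges to $w$ in $V$ by continuity of the truncation (using $(w-\Phi(z^*))^+=0$); if $\Phi(z_n)\to\Phi(z^*)$ in $L_\nu^\infty(\Omega)$, set $s_n:=1-\|\Phi(z_n)-\Phi(z^*)\|_{L^\infty(\Omega)}/\nu\uparrow 1$, note $\Phi(z_n)\ge s_n\Phi(z^*)\ge s_n w^+$ (using $\Phi(z^*)\ge\nu>0$ and $w^+\le\Phi(z^*)$), and take $w_n:=w-(1-s_n)w^+$, which converges to $w$ in $V$ since $w^+\in V$; in case~(b), with $\Phi(z_n)\to\Phi(z^*)$ only in $H$, use the barrier operator $\mathcal{Q}$: let $e_n\in V$ solve the obstacle VI $\langle\mathcal{Q}e_n,v-e_n\rangle\ge 0$ for all $v\ge g_n:=\Phi(z^*)-\Phi(z_n)$ (admissible, since $\Phi(z^*)\in V$ dominates the non-increasing, $H$-null gap $g_n\ge 0$), so that $w_n:=w-e_n\le\Phi(z^*)-g_n=\Phi(z_n)$, and $e_n\to 0$ in $V$ follows by comparison with the supersolution $\Phi(z^*)$ and with the zero solution of the limiting ($g\equiv 0$) problem — this is the corrected Toyoizumi argument. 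With Mosco convergence in hand and $f_n\to f^*$ in $H\hookrightarrow V'$, the classical stability theorem for VIs with a fixed strongly monotone, Lipschitz operator $A$ (via Minty's lemma, cf.\ \cite{Rodrigues1987}) delivers $S(f_n,\Phi(z_n))\to S(f^*,\Phi(z^*))$ in $V$.

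\emph{Step 2 (passage to the limit).} Since $f_n\le f_{n+1}\le f^*$, Proposition~\ref{Prop:IncreasingSolMap} gives $S(f_n,\Phi(\cdot))\le S(f_{n+1},\Phi(\cdot))\le S(f^*,\Phi(\cdot))$ on $[\underline y,\overline y]$, and for the sets $X(f),Y^\bullet(f),Z^\bullet(f)$ introduced in the proof of Lemma~\ref{lemma:Decreasing} this forces $Z^\bullet(f_n)\subset Z^\bullet(f_{n+1})\subset Z^\bullet(f^*)$, whence $0\le\mathbf{m}(f_n)\le\mathbf{m}(f_{n+1})\le\mathbf{m}(f^*)\le\overline y$. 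Thus $\{\mathbf{m}(f_n)\}$ is non-decreasing and order-bounded, so $\mathbf{m}(f_n)\uparrow\hat z$ in $H$ with $\hat z\le\mathbf{m}(f^*)$; testing the identity $\mathbf{m}(f_n)=S(f_n,\Phi(\mathbf{m}(f_n)))$ against $0\in\mathbf{K}(\Phi(\mathbf{m}(f_n)))$ and using strong monotonicity of $A$ (and $A(0)=0$ by homogeneity) together with $\|f_n\|_{V'}\le C\|f_n\|_H$ bounded shows $\{\mathbf{m}(f_n)\}$ is bounded in $V$, hence $\mathbf{m}(f_n)\rightharpoonup\hat z$ in $V$. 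Applying Step 1 with $z_n=\mathbf{m}(f_n)$ gives $\mathbf{m}(f_n)=S(f_n,\Phi(\mathbf{m}(f_n)))\to S(f^*,\Phi(\hat z))$ in $V$, and since $\mathbf{m}(f_n)\to\hat z$ as well, $\hat z=S(f^*,\Phi(\hat z))$, i.e.\ $\hat z$ is a fixed point of $y\mapsto S(f^*,\Phi(y))$ in $[\underline y,\overline y]$. Minimality of $\mathbf{m}(f^*)$ forces $\mathbf{m}(f^*)\le\hat z$, so $\hat z=\mathbf{m}(f^*)$, which yields at once $\mathbf{m}(f_n)\uparrow\mathbf{m}(f^*)$ in $H$ and $\mathbf{m}(f_n)\to\mathbf{m}(f^*)$ in $V$.

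\emph{Main obstacle.} The delicate part is Step~1 under Assumption~\ref{PhiAss}(b): when the obstacles converge only in $H$ one cannot truncate $w$ against $\Phi(z_n)$ inside $V$ (the truncations need not be bounded in $V$), so the elliptic barrier $\mathcal{Q}$ — with $\Phi(z^*)$ a supersolution — must be used to manufacture a correction $e_n$ that both dominates the obstacle gap $g_n$ and vanishes \emph{strongly in $V$}; establishing that strong $V$-convergence (rather than mere weak $V$- or strong $H$-convergence) is the technical heart of the argument, and it is here that the geometric/comparison considerations behind the corrected Toyoizumi argument do the real work.
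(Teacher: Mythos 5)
Your overall architecture (monotone inclusions of the sets $X,Y^\bullet,Z^\bullet$ giving $\mathbf{m}(f_n)\uparrow$, boundedness in $V$, then continuity of $(f,z)\mapsto S(f,\Phi(z))$ along non-decreasing iterates via Mosco convergence of $\mathbf{K}(\Phi(z_n))$) matches the paper, and Step 2 is sound. The genuine gap is in Step 1 under Assumption \ref{PhiAss}(b), which is exactly the technical heart you identify. You replace the paper's construction by an auxiliary lower-obstacle problem for $e_n$ with obstacle $g_n:=\Phi(z^*)-\Phi(z_n)$ and assert that $e_n\to 0$ \emph{strongly in $V$} ``by comparison with the supersolution $\Phi(z^*)$ and with the zero solution of the limiting problem.'' Comparison only yields the order bounds $g_n\le e_n\le\Phi(z^*)$ and a uniform $V$-bound; it does not identify the weak limit (doing so would require a recovery sequence for the constraint sets $\{v\ge g_n\}$, i.e.\ the very Mosco statement you are trying to prove, and $V\hookrightarrow H$ is not assumed compact), and it certainly does not give strong $V$-convergence. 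Stability of obstacle problems under obstacles converging only in $H$ is false in general (a shrinking spike obstacle in $H_0^1(0,1)$ with $\mathcal{Q}=-\Delta$ gives $e_n$ equal to a tent function of fixed height), so the assertion cannot stand without invoking the extra structure $\mathcal{Q}\Phi(z_n)\ge 0$, which your argument never actually uses beyond the harmless bound $e_n\le\Phi(z^*)$. The paper closes this gap differently: it defines the recovery element by the singularly perturbed equation $\langle r_n\mathcal{Q}w_n+w_n,v\rangle=(\min(w,\Phi(z_n)),v)$ with $r_n=\|\min(w,\Phi(z_n))-w\|_H$, and uses $\mathcal{Q}\Phi(z_n)\ge 0$ together with T-monotonicity of $\mathcal{Q}$ to show both $w_n\le\Phi(z_n)$ and $w_n\to w$ in $V$. (Your step could alternatively be repaired by noting that, for the non-decreasing case, $\mathcal{Q}\Phi(z_n)\ge 0$, $\Phi(z_n)\le\Phi(z^*)$ and $\Phi(z_n)\to\Phi(z^*)$ in $H$ already force $\Phi(z_n)\to\Phi(z^*)$ strongly in $V$, after which a translation argument suffices; but some such argument must be supplied.)

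A secondary, smaller issue: in the subcase $\Phi(z_n)\to\Phi(z^*)$ in $V$ of Assumption \ref{PhiAss}(a) you build the recovery sequence by truncation, $w_n=w-(w-\Phi(z_n))^+$, and appeal to ``continuity of the truncation'' in $V$. The paper's standing assumptions only require $(\cdot)^+:V\to V$ to be \emph{bounded}, not continuous, and boundedness alone does not give $(w-\Phi(z_n))^+\to(w-\Phi(z^*))^+=0$ in $V$; the paper avoids this by the simple translation $w_n=w-\Phi(z^*)+\Phi(z_n)$, which is the cleaner choice in the abstract setting (your truncation is fine in the concrete $H^1$ setting, where truncation is continuous, but that is an extra hypothesis here).
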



\begin{proof}
We use the result of Theorem \ref{thm:approximationMinMax} with $R_n(v):=S(f_n, \Phi(v))$ and $S(v):=S(f^*, \Phi(v))$. The classical continuity result for $f\mapsto S(f,\Phi(y))$ (see \cite{Kinderlehrer}) states:
\begin{equation}\label{eq:ContinuityForcingTerm}
    \|S(f^*, \Phi(y))-S(f_n, \Phi(y))\|_{V}\leq\frac{1}{c}\|f^*-f_n\|_{V'}.
\end{equation}
Since $f_n\rightarrow f^*$ in $V'$ as $n\rightarrow\infty$, we have $S(f_n, \Phi(y))\to S(f^*, \Phi(y))$ in $V$, uniformly on bounded sets for $y$. Additionally, by the usual monotonicity argument and using $v=0$ as a test function, we obtain that $\|S(f, \Phi(y))\|_{V}\leq\frac{1}{c}\|f\|_{V'}$ which implies that the set of fixed points of the maps $y\mapsto S(f_n,\Phi(y))$, for $n\in\mathbb{N}$, and  $y\mapsto S(f^*,\Phi(y))$ is uniformly bounded. Since $S(f_n,\Phi(y))\leq S(f_{n+1},\Phi(y))\leq S(f^*,\Phi(y))$ we are only left to prove that 
\begin{equation}\label{eq:ContinuityObstacle2}
    \lim_{n\rightarrow\infty}S(f_n, \Phi(v_n))=S(f^*, \Phi(v)) \text{ in } V,
\end{equation}
{for any sequence $\{v_n\}$ in $V$ satisfying $v_n\leq v_{n+1}$ for all $n$, and $v_n\rightharpoonup v$ in $V$}. This will be achieved by proving Mosco convergence of the associated constraints. 
Now we consider the two possible cases for $\Phi$ based on Assumption \ref{PhiAss}:
\begin{itemize}
\item[\textit{(a)}]  Since $\{v_n\}$ in $V$ satisfies $v_n\rightharpoonup v$ in $V$ and $\Phi(v_n)\rightarrow \Phi(v)$ in $L_\nu^\infty(\Omega)$, it follows that  $\mathbf{K}(\Phi(v_n))\rightarrow \mathbf{K}(\Phi(v))$ in the sense of Mosco {by a direct scaling argument} (see for example {\cite[Proposition 6.6, Section 4:7]{Rodrigues1987}} for a further  general result).

Suppose that $\Phi(v_n)\rightarrow \Phi(v)$ in $V$. Let $w\leq \Phi(v)$, and consider $w_n=w-\Phi(v)+\Phi(v_n)$. Then, $w_n\leq\Phi(v_n)$ and also $w_n\to w$ in $V$, i.e., \emph{(i)} in Definition \ref{definition:MoscoConvergence} holds. Furthermore, if $y_n\leq \Phi(v_n)$ and $y_n\rightharpoonup y$ in $V$, then by Mazur's lemma it follows that $y\leq \Phi(v)$ which proves \emph{(ii)} in Definition \ref{definition:MoscoConvergence}.

\item[\textit{(b)}] Recall  $\{v_n\}$ in $V$ satisfies $ v_n\leq v_{n+1}$ for all $n$, and $v_n\rightharpoonup v$ in $V$. Then{, we observe} $\Phi(v_n)\leq \Phi(v_{n+1})$, {given that} $\Phi$ is increasing, and   $\Phi(v_n)\rightarrow \Phi(v)$ in $H$ by {initial assumption}.  Hence, if $y_n\leq \Phi(v_n)$ and $y_n\rightharpoonup y$ in $V$, then by Mazur's lemma it follows that $y\leq \Phi(v)$, which proves \textit{(ii)} in Definition \ref{definition:MoscoConvergence}. 

In order to prove \textit{(i)} in Definition \ref{definition:MoscoConvergence}, we now follow a modification of the argument in \cite{Toyoizumi1991}. Let $w\in V$ such that $w\leq \Phi(v)$ and $w_n$ be defined by
\begin{equation}\label{eq:SingPert}
\left\langle r_n\mathcal{Q}w_n+w_n, v\right\rangle = ( \phi_n, v),\text{ for all } v\in V,
\end{equation}
where $r_n:=\|\phi_n- w\|_H$ and $\phi_n:=\min(w, \Phi(v_n))$, and note that $\phi_n\rightarrow w$ in $H$ and $w\in V$. Then, we can prove that $w_n\to w$ in $V$. Since  $\mathcal{Q}$ is linear, bounded, and $\langle \mathcal{Q}v, v\rangle\geq c\|v\|_V^2$ for all $v\in V$, from the definition of $w_n$ we have
\begin{align}\notag
	r_nc\|w_n-w\|_V^2+\|w_n-w\|_H^2&\leq \left\langle (r_n\mathcal{Q}+I)(w_n-w), w_n-w\right\rangle\\\label{eq:usesing}
	&\leq \langle \phi_n-w, w_n-w\rangle- r_n\langle\mathcal{Q}w, w_n-w\rangle\\\notag
	&\leq  r_n(C_p+\|\mathcal{Q}w\|_{V'})\| w_n-w\|_V,
\end{align}
where $C_p$ is the constant for the embedding $V \hookrightarrow H$, {and recall that $r_n=\|\phi_n- w\|_H$}. This implies that, $\{w_n\}$ is bounded in $V$, so that $w_n\rightharpoonup w^*$ (along a subsequence) for some $w^*\in V$. By taking the limit in \eqref{eq:SingPert}, it is shown that $w^*=w$ and that $w_n\rightharpoonup w^*$ in $V$ not only along a subsequence. It further follows that $w_n\to w$ in $H$, and since from \eqref{eq:usesing} we observe
\begin{align}
	r_nc\|w_n-w\|_V^2+\|w_n-w\|_H^2
	&\leq  r_n(\|w_n-w\|_H+\langle\mathcal{Q}w, w-w_n\rangle),
\end{align}
we have that $w_n\to w$ in $V$.

Now we prove that $w_n\leq \Phi(v_n)$. Consider $v=(w_n-\Phi(v_n))^+$ and subtract $\left\langle r_n\mathcal{Q}\Phi(v_n)+\Phi(v_n), v\right\rangle$ from both sides of \eqref{eq:SingPert}. Then, we get
\begin{align*}
 & r_n\left\langle\mathcal{Q}(w_n-\Phi(v_n)), (w_n-\Phi(v_n))^+\right\rangle +\|(w_n-\Phi(v_n))^+\|^2_H=\\
 &\quad -r_n\left\langle \mathcal{Q}\Phi(v_n), (w_n-\Phi(v_n))^+\right\rangle+( \min(w, \Phi(v_n))-\Phi(v_n), (w_n-\Phi(v_n))^+).
\end{align*}
Note that $\min(w, \Phi(v_n))-\Phi(v_n)\leq 0$ and by assumption $\mathcal{Q}\Phi(v_n)\geq 0$. Therefore the right hand side is less or equal to zero. Additionally, since $\mathcal{Q}$ is linear, $\langle \mathcal{Q}v^-, v^+\rangle\leq 0$, and $\langle \mathcal{Q}v, v\rangle\geq c\|v\|_V^2$ for all $v\in V$, {we observe that  
\begin{equation*}
c\|v^+\|_V^2\leq\langle \mathcal{Q}v^+, v^+\rangle \leq\langle \mathcal{Q}v^+, v^+\rangle -\langle\mathcal{Q}v^-, v^+\rangle=\langle\mathcal{Q}v, v^+\rangle.	
\end{equation*}
Thus}
\begin{align*}
&r_n c\|(w_n-\Phi(v_n))^+\|_V^2 +\|(w_n-\Phi(v_n))^+\|^2_H\leq\\
&\quad r_n\left\langle\mathcal{Q}(w_n-\Phi(v_n))^+, (w_n-\Phi(v_n))^+\right\rangle +\|(w_n-\Phi(v_n))^+\|^2_H\leq 0.
\end{align*}
This yields $w_n\leq\Phi(v_n)$, i.e., \textit{(i)} in Definition \ref{definition:MoscoConvergence} holds
\end{itemize}
\end{proof}

Lemma \ref{lemma:Decreasing} and \ref{lemma:Increasing} are associated to non-increasing and non-decreasing sequences of minimal solutions. In the following we establish Lemma \ref{lemma:DecreasingM} and \ref{lemma:IncreasingM} that deal with the analogous results but for maximal solutions.

\begin{lem}[\textsc{Non-increasing Sequences for $\mathbf{M}$}]\label{lemma:DecreasingM}
Suppose the following: 
\begin{itemize}
  \item[(i)] The sequence $\{f_n\}$ in $H^+$ is non-increasing and $\lim_{n\rightarrow\infty}f_n= f^*$ in $H$ for some $f^* \in H$.
  \item[(ii)] The upper bound mapping $\Phi$ satisfies that  if $\{v_n\}$ is bounded in $V$, $v_n\downarrow v$ in $H$, then $\Phi(v_n)\rightarrow \Phi(v)$ in $H$.
\end{itemize}
Then, we have
\begin{equation}\label{eq:ConvergenceDecreasingsM}
    \mathbf{M}(f_n)\downarrow \mathbf{M}(f^*) \text{ in } H, \quad\text{and} \quad \mathbf{M}(f_n)\rightarrow \mathbf{M}(f^*) \text { in } V.
\end{equation}
\end{lem}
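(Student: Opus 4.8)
The plan is to mimic the structure of Lemma \ref{lemma:Decreasing}, but now tracking the \emph{maximal} fixed points through the characterization of $\mathbf{M}(f)$ as the minimal element of the set $\tilde Z^\bullet(f)$ (equivalently $\tilde Z(f)$) given by the preceding Lemma. The overall flow has three steps exactly as before: (1) a continuity step showing $S(f_n,\Phi(z_n))\to S(f^*,\Phi(z^*))$ in $V$ along \emph{non-increasing} sequences $z_n\downarrow z^*$ with $z_n\rightharpoonup z^*$ in $V$; (2) a step transferring membership in the appropriate ``$\tilde Z^\bullet$'' set through the limit; and (3) assembling these with a monotonicity/boundedness argument to identify the limit of $\mathbf{M}(f_n)$ with $\mathbf{M}(f^*)$.

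\textbf{Step 1 (continuity of the VI solution map).} I would reuse verbatim the argument of Step 1 of Lemma \ref{lemma:Decreasing}: since $z_n\downarrow z^*$ in $H$ with $\{z_n\}$ bounded in $V$, hypothesis (ii) gives $\Phi(z_n)\to\Phi(z^*)$ in $H$ with $\Phi(z_n)\geq\Phi(z_{n+1})$, hence $\mathbf{K}(\Phi(z_n))\supset\mathbf{K}(\Phi(z_{n+1}))$ and $\mathbf{K}(\Phi(z_n))\supset\mathbf{K}(\Phi(z^*))$; set $w_n:=S(f_n,\Phi(z_n))$, which by Proposition \ref{Prop:IncreasingSolMap} is non-increasing and bounded below by $0$, so $w_n\downarrow w^*$ in $H$; the uniform monotonicity of $A$ tested with $0$ bounds $\{w_n\}$ in $V$, so $w_n\rightharpoonup w^*$ in $V$; passing to the limit in the Minty form of the VI (using $f_n\to f^*$ in $V'$, which holds here because $f_n\to f^*$ in $H\hookrightarrow V'$) gives $w^*=S(f^*,\Phi(z^*))$, and a final uniform-monotonicity estimate upgrades the convergence to strong in $V$. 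Nothing in this step used $L^\infty$-convergence of $f_n$, only $H$-convergence, so the weaker hypothesis (i) here is sufficient.

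\textbf{Steps 2--3 (the set-membership transfer and identification).} Here is where this proof differs from Lemma \ref{lemma:Decreasing}, and I expect this to be the main obstacle. Define, for $f\in V'$, the sets $X^\bullet(f)=\{x\in H:\ x\leq\overline y,\ x\leq S(f,\Phi(x))\}$, $Y(f)=\{x\in H:\ \underline y\leq x\leq\overline y,\ x\geq S(f,\Phi(x))\}$, and $\tilde Z^\bullet(f)=\{y\in Y(f):\ x\leq y\ \forall x\in X^\bullet(f)\}$, so that $\mathbf{M}(f)$ is the minimal element of $\tilde Z^\bullet(f)$. Because $f_n\geq f_{n+1}\geq f^*$, Proposition \ref{Prop:IncreasingSolMap} gives $S(f^*,\Phi(x))\leq S(f_{n+1},\Phi(x))\leq S(f_n,\Phi(x))$, whence $Y(f_n)\subset Y(f_{n+1})\subset Y(f^*)$ and $X^\bullet(f^*)\subset X^\bullet(f_{n+1})\subset X^\bullet(f_n)$, so $\tilde Z^\bullet(f_n)\supset\tilde Z^\bullet(f_{n+1})\supset\tilde Z^\bullet(f^*)$; minimal elements therefore satisfy $\mathbf{M}(f^*)\leq\mathbf{M}(f_{n+1})\leq\mathbf{M}(f_n)\leq\overline y$. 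Thus $\{\mathbf{M}(f_n)\}$ is non-increasing and bounded below in order, so $\mathbf{M}(f_n)\downarrow \check y$ in $H$ for some $\check y$; a standard monotonicity argument (test the VI defining $\mathbf{M}(f_n)=S(f_n,\Phi(\mathbf{M}(f_n)))$ with $0$) bounds $\{\mathbf{M}(f_n)\}$ in $V$, so $\mathbf{M}(f_n)\rightharpoonup\check y$ in $V$ and, being non-increasing, Step 1 applies to give $\mathbf{M}(f_n)\to\check y$ in $V$ with $\check y=S(f^*,\Phi(\check y))$, i.e. $\check y$ is a fixed point of $z\mapsto S(f^*,\Phi(z))$; since $\mathbf{M}(f^*)$ is the \emph{maximal} such fixed point in $[\underline y,\overline y]$ and $\check y\leq\mathbf{M}(f_n)$ is not directly comparable with $\mathbf{M}(f^*)$, one needs the reverse inequality.

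\textbf{The delicate inequality.} To get $\mathbf{M}(f^*)\leq\check y$, the idea is to show $\mathbf{M}(f_n)\geq\mathbf{M}(f^*)$ with the limit preserved, which we already have from the set inclusions $\tilde Z^\bullet(f^*)\subset\tilde Z^\bullet(f_n)$ (minimal element of the larger set is $\leq$ minimal element of the smaller set, giving $\mathbf{M}(f^*)\leq\mathbf{M}(f_n)$); passing to the $H$-limit and using that $H^+$ is closed then yields $\mathbf{M}(f^*)\leq\check y$. Combined with the maximality of $\mathbf{M}(f^*)$ and the fact that $\check y$ is a fixed point in $[\underline y,\overline y]$, which forces $\check y\leq\mathbf{M}(f^*)$, we conclude $\check y=\mathbf{M}(f^*)$. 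Hence $\mathbf{M}(f_n)\downarrow\mathbf{M}(f^*)$ in $H$ and $\mathbf{M}(f_n)\to\mathbf{M}(f^*)$ in $V$, which is \eqref{eq:ConvergenceDecreasingsM}. The one point requiring genuine care is the interplay of the two inequalities: here, unlike the minimal-solution case where monotone perturbations from below are ``hard'' (cf. Example \ref{ex:Counterexample}), the non-increasing perturbation is the favorable direction for $\mathbf{M}$, so the direction of the set inclusions lines up with the direction of the monotone convergence, and no scaling/homogeneity trick (as was needed in Step 2 of Lemma \ref{lemma:Decreasing}) is required — which is why the hypothesis on $\Phi$ here is merely the $H$-continuity along bounded non-increasing sequences.
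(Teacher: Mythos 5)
Your overall strategy works and is in fact a more hands-on route than the paper's: the paper disposes of this lemma in a few lines by applying Theorem \ref{thm:approximationMinMax} with $U_n(v):=S(f_n,\Phi(v))$ and $T(v):=S(f^*,\Phi(v))$, checking hypothesis (iii) via the Lipschitz estimate \eqref{eq:ContinuityForcingTerm} (uniform in the obstacle, and $\|f_n-f^*\|_{V'}\le C\|f_n-f^*\|_H\to 0$), hypothesis (ii) via Proposition \ref{Prop:IncreasingSolMap}, and hypothesis (i) via Step 1 of Lemma \ref{lemma:Decreasing}, which — as you correctly observe — only needs $H$-convergence of the forcing terms. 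Your Step 1 and your final identification of the limit $\check y$ (it is a fixed point of $z\mapsto S(f^*,\Phi(z))$ in $[\underline y,\overline y]$, hence $\check y\le\mathbf{M}(f^*)$ by maximality, while $\mathbf{M}(f^*)\le\mathbf{M}(f_n)$ passes to the $H$-limit since $H^+$ is closed) are sound. Note also that for this favorable direction the plain sets $\tilde Z$ of Theorem \ref{thm:approximationMinMax} would suffice; the bullet sets are really only needed for the unfavorable direction treated in Lemma \ref{lemma:IncreasingM}.

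There is, however, a concrete error in your monotonicity step. From $f^*\le f_{n+1}\le f_n$ you correctly obtain $X^\bullet(f^*)\subset X^\bullet(f_{n+1})\subset X^\bullet(f_n)$ and $Y(f_n)\subset Y(f_{n+1})\subset Y(f^*)$, but these yield $\tilde Z^\bullet(f_n)\subset\tilde Z^\bullet(f_{n+1})\subset\tilde Z^\bullet(f^*)$ — the \emph{opposite} of the inclusions you wrote: membership in $\tilde Z^\bullet(f)$ requires lying in $Y(f)$ and dominating all of $X^\bullet(f)$, and both requirements become more restrictive as $f$ increases. Moreover, your parenthetical inference is inconsistent with your own claimed inclusion: if $\tilde Z^\bullet(f^*)\subset\tilde Z^\bullet(f_n)$ were true, the principle ``least element of the larger set $\le$ least element of the smaller set'' would give $\mathbf{M}(f_n)\le\mathbf{M}(f^*)$, the reverse of what you assert and need. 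Finally, comparing ``minimal'' elements produced by Zorn's lemma is not legitimate by itself; what rescues the argument is that, by the lemma of section 5 (and Tartar's proof), $\mathbf{M}(f)$ is a fixed point, hence belongs to $X^\bullet(f)$, hence is dominated by every element of $\tilde Z^\bullet(f)$, i.e.\ it is the \emph{least} element of $\tilde Z^\bullet(f)$. With the corrected inclusions this gives $\mathbf{M}(f_n)\in\tilde Z^\bullet(f_{n+1})\subset\tilde Z^\bullet(f^*)$ and therefore $\mathbf{M}(f^*)\le\mathbf{M}(f_{n+1})\le\mathbf{M}(f_n)\le\overline y$, which is exactly the chain your Steps 2--3 require; with that repair the rest of your proof goes through.
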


\begin{proof}As obtained in the proof of Lemma \ref{lemma:Increasing}, we have that $S(f_n, \Phi(y))\to S(f^*, \Phi(y))$ in $V$ and that the set of fixed points of the maps $y\mapsto S(f_n,\Phi(y))$, for $n\in\mathbb{N}$, and  $y\mapsto S(f^*,\Phi(y))$ are uniformly bounded in $V$. 

Let $\{v_n\}$ be such that $v_n\rightharpoonup v$ in $V$ and $v_n\geq v_{n+1}\geq 0$ for all $n$, then $v_n\rightarrow v$ in $H$, $\Phi(v_n)\geq \Phi(v_{n+1})\geq 0$ and $\Phi(v_n)\rightarrow \Phi(v)$ in $H$. Note that $f_n\rightarrow f^*$ in $H$ is enough for step 1 of the proof of Lemma \ref{lemma:Decreasing} to hold, i.e., 
\begin{equation*}
\lim_{n\rightarrow\infty}S(f_n, \Phi(v_n))=S(f, \Phi(v)), \quad \text{ in } V.
\end{equation*}

Therefore applying Theorem \ref{thm:approximationMinMax} to $T_n(v):=S(f_n, \Phi(v))$ and $S(v):=S(f^*, \Phi(v))$, we obtain that \eqref{eq:ConvergenceDecreasingsM} holds true.
\end{proof}

\begin{lem}[\textsc{Non-decreasing Sequences for $\mathbf{M}$}]\label{lemma:IncreasingM}
Suppose the following: 
\begin{itemize}
  \item[(i)] The sequence $\{f_n\}$ in $L_\nu^\infty(\Omega)$ is non-decreasing and  $\lim_{n\rightarrow\infty}f_n= f^*$ in $L^\infty(\Omega)$ for some $f^*$.
  \item[(ii)] The upper bound mapping $\Phi$ satisfies   
  \begin{equation*}
\lambda  \Phi(y)\leq \Phi(\lambda  y), \quad\text{ for all }\quad  0<\lambda< 1,\: y\in  H^+,
\end{equation*}
and Assumption \ref{PhiAss}.
\end{itemize}
Then, we have
\begin{equation}\label{eq:ConvergenceIncreasingsM}
    \mathbf{M}(f_n)\uparrow \mathbf{M}(f^*) \text{ in } H, \quad\text{ and } \quad \mathbf{M}(f_n)\rightarrow \mathbf{M}(f^*) \text { in } V.
\end{equation}
\end{lem}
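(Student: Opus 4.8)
\textbf{Proof plan for Lemma \ref{lemma:IncreasingM}.} The structure mirrors the proof of Lemma \ref{lemma:Increasing}, which handled non-decreasing sequences for $\mathbf{m}$, but now adapted to the maximal fixed point $\mathbf{M}$ and using the $\bullet$-characterization of $\mathbf{M}(f)$ as the minimal element of $\tilde{Z}^\bullet(f)$ together with the reversed scaling hypothesis $\lambda\Phi(y)\le\Phi(\lambda y)$ for $0<\lambda<1$. First I would recall from the proof of Lemma \ref{lemma:Increasing} that since $f_n\to f^*$ in $V'$ (via $L^\infty(\Omega)\hookrightarrow V'$), the classical continuity estimate \eqref{eq:ContinuityForcingTerm} gives $S(f_n,\Phi(y))\to S(f^*,\Phi(y))$ in $V$ uniformly on bounded $y$-sets, and the monotonicity/test-function-$0$ argument gives uniform $V$-boundedness of the fixed-point sets of $y\mapsto S(f_n,\Phi(y))$ and $y\mapsto S(f^*,\Phi(y))$. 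By Proposition \ref{Prop:IncreasingSolMap} and $f_n\le f_{n+1}\le f^*$ we get $S(f_n,\Phi(y))\le S(f_{n+1},\Phi(y))\le S(f^*,\Phi(y))$, so the inclusions of $X,Y$ sets force $\mathbf{M}(f_n)\le \mathbf{M}(f_{n+1})\le \mathbf{M}(f^*)$; hence $\{\mathbf{M}(f_n)\}$ is non-decreasing and bounded above in order, so $\mathbf{M}(f_n)\uparrow \check y$ in $H$ for some $\check y$, and uniform $V$-boundedness yields $\mathbf{M}(f_n)\rightharpoonup \check y$ in $V$.

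The crux is to upgrade this to strong $V$-convergence $\mathbf{M}(f_n)\to \check y$ and to identify $\check y=\mathbf{M}(f^*)$. For the strong convergence, set $z_n:=\mathbf{M}(f_n)=S(f_n,\Phi(z_n))$; since $\{z_n\}$ is non-decreasing and weakly convergent in $V$, I would establish the analogue of Step 1 of Lemma \ref{lemma:Decreasing}, namely $S(f_n,\Phi(z_n))\to S(f^*,\Phi(\check y))$ in $V$, via Mosco convergence of $\{\mathbf{K}(\Phi(z_n))\}$ to $\mathbf{K}(\Phi(\check y))$: condition (ii) of Mosco (weak limits of feasible points stay feasible) follows from Mazur's lemma and $\Phi(z_n)\to\Phi(\check y)$ in $H$ (monotonicity of $\Phi$ plus Assumption \ref{PhiAss}); condition (i) (strong recovery sequences) is supplied by Assumption \ref{PhiAss}, using the direct scaling argument in case (a) or the Toyoizumi-type singular-perturbation construction \eqref{eq:SingPert} in case (b), exactly as in the proof of Lemma \ref{lemma:Increasing}. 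Mosco convergence plus $f_n\to f^*$ in $V'$ then gives $z_n=S(f_n,\Phi(z_n))\to S(f^*,\Phi(\check y))$ strongly in $V$, and since $z_n\rightharpoonup \check y$ we conclude $\check y=S(f^*,\Phi(\check y))$, i.e. $\check y$ is a fixed point of $y\mapsto S(f^*,\Phi(y))$ lying in $[\underline y,\overline y]$, so $\check y\le \mathbf{M}(f^*)$.

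It remains to show $\mathbf{M}(f^*)\le \check y$, which is where the reversed scaling hypothesis on $\Phi$ enters (playing the role that $\lambda\Phi(y)\ge\Phi(\lambda y)$ played for $\mathbf{m}$). Working with the $\bullet$-characterization, $\mathbf{M}(f^*)$ is the minimal element of $\tilde Z^\bullet(f^*)=\{y\in Y(f^*): x\le y\ \forall x\in X^\bullet(f^*)\}$. Given any $x\in X^\bullet(f^*)$, i.e. $x\le \overline y$ and $x\le S(f^*,\Phi(x))$, I would produce a scaled element $x_n:=\lambda_n x$ with $\lambda_n:=\|f_n/f^*\|_{L^\infty(\Omega)}\uparrow 1$ (note $\lambda_n\le 1$ since $f_n\le f^*$, so $\lambda_n f^*\le f_n$; one should also argue $\lambda_n$ is bounded away from $0$, e.g. $\lambda_n\ge \nu/\|f^*\|_{L^\infty}$, using $f_n,f^*\in L^\infty_\nu(\Omega)$) and check $x_n\in X^\bullet(f_n)$: indeed $x_n=\lambda_n x\le \lambda_n \overline y\le \overline y$, and using order-one homogeneity of $A$ together with $\lambda_n f^*\le f_n$ and the hypothesis $\lambda_n\Phi(x)\le\Phi(\lambda_n x)$, one computes $\lambda_n x\le \lambda_n S(f^*,\Phi(x))=S(\lambda_n f^*,\lambda_n\Phi(x))\le S(f_n,\Phi(\lambda_n x))$, giving $x_n\in X^\bullet(f_n)$. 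Since $z_n=\mathbf{M}(f_n)\in \tilde Z^\bullet(f_n)$ dominates every element of $X^\bullet(f_n)$, we get $\lambda_n x=x_n\le z_n$; passing to the limit ($\lambda_n\to 1$, $z_n\to \check y$ in $H$, $H^+$ closed) yields $x\le \check y$ for all $x\in X^\bullet(f^*)$, while $\check y\in Y(f^*)$ from the fixed-point property, so $\check y\in \tilde Z^\bullet(f^*)$ and therefore $\mathbf{M}(f^*)\le \check y$. Combining both inequalities gives $\check y=\mathbf{M}(f^*)$, whence $\mathbf{M}(f_n)\to\mathbf{M}(f^*)$ in $V$ and $\mathbf{M}(f_n)\uparrow\mathbf{M}(f^*)$ in $H$, which is \eqref{eq:ConvergenceIncreasingsM}. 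The main obstacle I anticipate is the bookkeeping around the scaled test elements $x_n$ — verifying $x_n\in X^\bullet(f_n)$ cleanly (the interplay of homogeneity of $A$, monotonicity of $S$, and the reversed scaling of $\Phi$) and ensuring $\lambda_n$ stays in a compact subset of $(0,1]$ so that the limit passage is legitimate; the Mosco-convergence step is essentially a verbatim adaptation of Lemma \ref{lemma:Increasing} and should not pose new difficulties.
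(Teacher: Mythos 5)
Your proposal follows the paper's own route almost exactly: the order-inclusions giving monotonicity of $\{\mathbf{M}(f_n)\}$, the strong $V$-convergence of $S(f_n,\Phi(z_n))$ for non-decreasing weakly convergent $z_n$ via Mosco convergence under Assumption \ref{PhiAss} (borrowed from Lemma \ref{lemma:Increasing}), the identification of the limit $\check y$ as a fixed point (hence $\check y\leq \mathbf{M}(f^*)$), and the reverse inequality via scaled test elements in $X^\bullet(f_n)$ together with the minimality of $\mathbf{M}(f^*)$ in $\tilde{Z}^\bullet(f^*)$.

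There is, however, one concrete error precisely at the step you flagged as delicate: you take $\lambda_n:=\|f_n/f^*\|_{L^\infty(\Omega)}$, i.e.\ the essential \emph{supremum} of $f_n/f^*$, and assert that $f_n\leq f^*$ gives $\lambda_n f^*\leq f_n$. That implication is backwards: $\lambda_n f^*(x)\leq f_n(x)$ a.e.\ requires $\lambda_n\leq \operatorname{ess\,inf}(f_n/f^*)$, whereas the essential supremum dominates the infimum, so with your choice the inequality fails whenever $f_n/f^*$ is non-constant, and then the comparison $S(\lambda_n f^*,\lambda_n\Phi(x))\leq S(f_n,\Phi(\lambda_n x))$ (which needs $\lambda_n f^*\leq f_n$ to invoke Proposition \ref{Prop:IncreasingSolMap}) breaks, and with it the membership $x_n\in X^\bullet(f_n)$. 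You appear to have transplanted the choice from Lemma \ref{lemma:Decreasing}, where $f_n\geq f^*$ and one needs $\lambda_n f^*\geq f_n$, so the supremum is the right object there. The paper instead sets $\lambda_n:=\operatorname{ess\,inf}|f_n/f^*|$; this satisfies $\lambda_n\leq 1$, is non-decreasing, obeys $\lambda_n f^*\leq f_n$ by definition, and $\lambda_n\uparrow 1$ because
\begin{equation*}
\Bigl|1-\operatorname{ess\,inf}\bigl|f_n/f^*\bigr|\Bigr|\leq \frac{\|f^*-f_n\|_{L^\infty(\Omega)}}{\nu},
\end{equation*}
using $f^*\in L^\infty_\nu(\Omega)$ (your lower bound $\lambda_n\geq \nu/\|f^*\|_{L^\infty(\Omega)}$ is not actually needed once this is in place). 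With that single replacement, the remainder of your argument coincides with the paper's proof and is correct.
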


\begin{proof}
For $f$ define the set-valued mappings
\begin{align*}
    &X^\bullet(f)=\{x\in H:   x\leq \overline{y} \text{ and } x\leq S(f, \Phi(x))\}, \\
        &Y(f)=\{x\in H: \underline{y}\leq x \leq \overline{y}  \text{ and } x\geq S(f, \Phi(x))\},\\
            &\tilde{Z}^\bullet(f)=\{y\in Y(f): x\leq y \text { for all } x\in X^\bullet(f)\}.
\end{align*}
If $\{z_n\}$ satisfies $z_n\rightharpoonup z^*$ in $V$ for some $z^*\in V$ and is non-decreasing, i.e., $z_{n}\leq z_{n+1}$ for all $n\in \mathds{N}$, then 
\begin{equation}\label{eq:ConvSolMapping}
S(f_{n}, \Phi(z_{n}))\rightarrow S(f^*, \Phi(z^*)), \quad \text{ in } V,
\end{equation}
as proven in Lemma  \ref{lemma:Increasing}. We now show that if $z_n\in \tilde{Z}^\bullet(f_n)$, for all $n\in\mathbb{N}$, then $z^*\in \tilde{Z}^\bullet(f^*)$.

Since $f_n\leq f_{n+1}$, for all $n\in\mathbb{N}$, and $\lim_{n\rightarrow\infty}f_n= f^*$ in $L^\infty(\Omega)$, we have that $f_n\leq f^*$ and $S(f_n,\Phi(x))\leq S(f^*,\Phi(x))$, for all $n\in\mathbb{N}$. Therefore,
\begin{equation}\label{eq:InclusionsM}
    X^\bullet(f_n)\subset X^\bullet(f^*)  \text{ and } Y(f^*) \subset Y(f_n), \text { and hence } \tilde{Z}^\bullet(f^*)\subset \tilde{Z}^\bullet(f_n).
\end{equation}
Also, $z_{n}\in \tilde{Z}^\bullet(f_{n})$ and hence $z_{n}\in Y(f_{n})$, i.e., $\underline{y}\leq z_{n}\leq\overline{y}$ and  $z_{n}\geq S(f_{n}, \Phi(z_{n}))$. Therefore, by \eqref{eq:ConvSolMapping} and since $z_n\rightarrow z^*$ in $H$ (note that $z_{n}\leq z_{n+1}\leq\overline{y}$) we observe that
\begin{equation}\label{eq:zisinZM}
   \underline{y}\leq z^*\leq\overline{y} \:\text{ and }\: z^*\geq S(f^*, \Phi(z^*)), \text{ and hence }z^*\in Y(f^*).
\end{equation}

Let $x\in X^\bullet(f^*)$ be arbitrary and consider $x_n:=\lambda_n x$, with  $\lambda_n:=\operatorname{ess\:inf}|f_n/f^*|$ which yields $\lambda_n\uparrow 1$. Indeed, since $f_n\leq f_{n+1}$, we have  $|f_n/f^*|\leq |f_{n+1}/f^*|\leq 1$ and 
\begin{equation*}
\left|1-\operatorname{ess\:inf}\left|\frac{f_n}{f^*}\right|\right|={\operatorname{ess\:sup}}\left|1-\left|\frac{f_n}{f^*}\right|\right|\leq \frac{\|f^*-f_n\|_{L^\infty(\Omega)}}{\nu},
\end{equation*}
where we have used that $f^*\in L_\nu^\infty(\Omega)$, and the result follows from the assumed convergence $f_n\rightarrow {f^*}$ in $L^\infty(\Omega)$. 

Therefore,  $x_n =\lambda_n x \leq\lambda_n\overline{y} \leq \overline{y}$, $\lambda_n  {f^*}\leq  f_{n}$, and by the structural assumption on $\Phi$, we have $\lambda_n  \Phi(y)\leq \Phi(\lambda_n  y)$.  Furthermore, we obtain the following chain of inequalities:
\begin{align*}
    \lambda_n  x&\leq \lambda_n S({f^*}, \Phi(x))= S( \lambda_n {f^*}, \lambda_n\Phi( x))\leq  S(f_n, \Phi(\lambda_n   x)),
\end{align*}
{where we have used that $A$ is homogenous of order one. Therefore,} $x_n\in  X^\bullet(f_n)$ and $x_n\rightarrow x$ in $H$.

Since, {by hypothesis}, $z_{n}\in \tilde{Z}^\bullet(f_{n})$,  we have  $x_n\leq z_{n}$ given the fact that $x_n\in  X^\bullet(f_n)$. Additionally, along a subsequence we have that $x_n\rightarrow x$ and $z_n\rightarrow z^*$ in $H$ so that $x\leq z^*$. However, $x\in X^\bullet(f^*)$ was arbitrary and hence, by \eqref{eq:zisinZM}, $z^*\in \tilde{Z}^\bullet(f^*)$.

Since $\mathbf{M}(f_n)$ and $\mathbf{M}(f^*)$ are well-defined as the minimal elements of $\tilde{Z}^\bullet(f_n)$ and $\tilde{Z}^\bullet(f^*)$, respectively, it follows immediately from \eqref{eq:InclusionsM} that $\mathbf{M}(f_n)\leq \mathbf{M}(f^*)$, and furthermore, we have that $\mathbf{M}(f_{n})\leq \mathbf{M}(f_{n+1})$. Denoting $z_n=\mathbf{M}(f_n)$, we have $z_n=S(f_n, \Phi(z_n))$, and since $0\in \mathbf{K}(\Phi(z_n))$, a {strong monotonicity argument} gives $\|z_n\|_{V}\leq\frac{1}{c}\|f_n\|_{V'}\leq \frac{1}{c}\|F\|_{V'}<\infty$. Hence, $z_n$ is bounded in $V$, non-decreasing in order and $z_n\in \tilde{Z}^\bullet(f_n)$. Therefore, by the above paragraphs, we have that $z_n=\mathbf{M}(f_n)\rightharpoonup z^*$ in $V$ and  $z^*\in \tilde{Z}^\bullet(f^*)$ and additionally, since $z_n=S(f_{n}, \Phi(z_n))$, by \eqref{eq:ConvSolMapping}, we have that $z_n\rightarrow z^*$ in $V$, $z^*=S(f^*, \Phi(z^*))$, i.e., $z^*$ is a fixed point of the map $z\mapsto S(f^*, \Phi(z))$ and hence $z^*\leq \mathbf{M}(f^*)$. By definition of $\tilde{Z}^\bullet(f^*)$, we have that  $x\leq z^*$ for all $x\in X^\bullet(f^*)$ and we readily observe $\mathbf{M}(f^*)\in X^\bullet(f^*)$, so that $ \mathbf{M}(f^*) \leq z^*$, i.e., $\mathbf{M}(f^*)= z^*$.
%
\end{proof}

{
\begin{remark}\label{rem:equiv}
	Note that condition
	  \begin{equation*}
\lambda  \Phi(y)\geq \Phi(\lambda  y), \quad\text{ for all } \quad \lambda> 1,\: y\in  H^+,
\end{equation*}
in Lemma \ref{lemma:Decreasing}, and condition 
  \begin{equation*}
\lambda  \Phi(y)\leq \Phi(\lambda  y), \quad\text{ for all }\quad  0<\lambda< 1,\: y\in  H^+,
\end{equation*}
in Lemma \ref{lemma:IncreasingM} are equivalent: Consider the change of variables $y=\frac{1}{\lambda}\tilde{y}$ with $\tilde{y}\in  H^+$.
\end{remark}
}

\section{Non-monotone perturbations and problem \eqref{eq:CQVI}} \label{control}

We are now in the position to establish our fundamental result concerning the behavior of the maps $f\mapsto \mathbf{m}(f)$ and $f\mapsto \mathbf{M}(f)$. Although the hypotheses of lemmas \ref{lemma:Decreasing}, \ref{lemma:Increasing}, \ref{lemma:DecreasingM} and \ref{lemma:IncreasingM} seem to be quite diverse, when considering the intersection in the following theorem, the assumptions are simplified. As in the previous section we assume that $0\leq f_n\leq F$ for any sequence $\{f_n\}$ and that $[\underline{y},\overline{y}]=[0, A^{-1}F]$.

\begin{thm}\label{thm:StabilityMinMax}
Let $\{f_n\}$ in $L^\infty_\nu(\Omega)$ be such that $\lim f_n=f^*$ in $L^\infty(\Omega)$ for some $f^*$, suppose that the upper bound mapping $\Phi: H^+ \rightarrow  H^+$ satisfies Assumption \ref{PhiAss}, { and that $\lambda  \Phi(y)\geq \Phi(\lambda  y)$ for any $\lambda> 1$ and any $y\in  H^+$} . Then the following hold true:
\begin{itemize}
\item[(i)] { The sequence of minimal solutions satisfy } 
					\begin{equation}\label{eq:ConvergenceMin}
    \mathbf{m}(f_n)\rightarrow \mathbf{m}(f^*) \text { in } H, \qquad \text{and} \qquad  \mathbf{m}(f_n)\rightharpoonup \mathbf{m}(f^*) \text { in } V.
\end{equation}
\item[(ii)] { The sequence of maximal solutions satisfy } 
					\begin{equation}\label{eq:ConvergenceMax}
    \mathbf{M}(f_n)\rightarrow \mathbf{M}(f^*) \text { in } H,  \qquad \text{and} \qquad  \mathbf{M}(f_n)\rightharpoonup \mathbf{M}(f^*) \text { in } V.
\end{equation}
\end{itemize}
\end{thm}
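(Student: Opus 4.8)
The plan is to squeeze the (non\-monotone) sequence $\{f_n\}$ between two \emph{monotone} sequences of forcing terms to which Lemmas \ref{lemma:Decreasing}--\ref{lemma:IncreasingM} directly apply, and then to pass to the limit using that the self\-dual cone $H^+$ is normal. Since $f_k\to f^*$ in $L^\infty(\Omega)$ with each $f_k\geq\nu$ a.e.\ and $f_k\leq F$, the limit obeys $\nu\leq f^*\leq F$ a.e.; setting $\delta_n:=\sup_{k\geq n}\|f_k-f^*\|_{L^\infty(\Omega)}$ (finite, non\-increasing, $\delta_n\downarrow 0$) I would define $\underline f_n:=\max(f^*-\delta_n,\nu)$ and $\overline f_n:=\min(f^*+\delta_n,F)$. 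These lie in $L^\infty_\nu(\Omega)$, satisfy $0\leq\underline f_n\leq f_n\leq\overline f_n\leq F$ for every $n$, the first being non\-decreasing with $\underline f_n\to f^*$ in $L^\infty(\Omega)$ and the second non\-increasing with $\overline f_n\to f^*$ in $L^\infty(\Omega)$.

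Next I would record that $f\mapsto\mathbf{m}(f)$ and $f\mapsto\mathbf{M}(f)$ are increasing on $[0,F]$: this is exactly the inclusion argument already used in the proofs of Lemmas \ref{lemma:Decreasing} and \ref{lemma:IncreasingM} (cf.\ \eqref{eq:Inclusions}, \eqref{eq:InclusionsM}), namely $g\leq h$ implies $Z^\bullet(g)\subset Z^\bullet(h)$ and $\tilde Z^\bullet(h)\subset\tilde Z^\bullet(g)$ via the monotonicity of $S$ from Proposition \ref{Prop:IncreasingSolMap}, so taking maximal resp.\ minimal elements gives $\mathbf{m}(g)\leq\mathbf{m}(h)$ and $\mathbf{M}(g)\leq\mathbf{M}(h)$. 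Hence $\mathbf{m}(\underline f_n)\leq\mathbf{m}(f_n)\leq\mathbf{m}(\overline f_n)$ and $\mathbf{M}(\underline f_n)\leq\mathbf{M}(f_n)\leq\mathbf{M}(\overline f_n)$ for all $n$. Now Lemma \ref{lemma:Increasing} and Lemma \ref{lemma:IncreasingM} apply to $\{\underline f_n\}$ (for the latter, invoking Remark \ref{rem:equiv} to convert the scaling hypothesis), yielding $\mathbf{m}(\underline f_n)\uparrow\mathbf{m}(f^*)$ and $\mathbf{M}(\underline f_n)\uparrow\mathbf{M}(f^*)$ in $H$ and in $V$; and Lemma \ref{lemma:Decreasing} and Lemma \ref{lemma:DecreasingM} apply to $\{\overline f_n\}$, yielding $\mathbf{m}(\overline f_n)\downarrow\mathbf{m}(f^*)$ and $\mathbf{M}(\overline f_n)\downarrow\mathbf{M}(f^*)$ in $H$ and in $V$. (The continuity of $\Phi$ along $V$\-bounded sequences with $v_n\downarrow v$ in $H$ required by Lemmas \ref{lemma:Decreasing} and \ref{lemma:DecreasingM} follows from Assumption \ref{PhiAss}, since such $v_n$ converge weakly in $V$ and $L^\infty_\nu(\Omega),V\hookrightarrow H$.)

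Finally, for $0\leq x\leq y$ in $H$ the self\-duality of $H^+$ gives $\|x\|_H^2=\|y\|_H^2-2(x,y-x)-\|y-x\|_H^2\leq\|y\|_H^2$, i.e.\ $H^+$ is normal with constant one; applying this to $0\leq\mathbf{m}(f_n)-\mathbf{m}(\underline f_n)\leq\mathbf{m}(\overline f_n)-\mathbf{m}(\underline f_n)$ and using the previous step I obtain $\|\mathbf{m}(f_n)-\mathbf{m}(f^*)\|_H\leq\|\mathbf{m}(\overline f_n)-\mathbf{m}(\underline f_n)\|_H+\|\mathbf{m}(\underline f_n)-\mathbf{m}(f^*)\|_H\to 0$, and identically $\mathbf{M}(f_n)\to\mathbf{M}(f^*)$ in $H$. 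Since testing the defining VI with $0$ and using $A(0)=0$ (order\-one homogeneity) together with strong monotonicity yields the uniform bound $\|\mathbf{m}(f_n)\|_V,\|\mathbf{M}(f_n)\|_V\leq\frac1c\|F\|_{V'}$, these sequences are bounded in the reflexive space $V$, and as their strong $H$\-limits are already identified, a subsequence argument gives $\mathbf{m}(f_n)\rightharpoonup\mathbf{m}(f^*)$ and $\mathbf{M}(f_n)\rightharpoonup\mathbf{M}(f^*)$ in $V$, which is \eqref{eq:ConvergenceMin}--\eqref{eq:ConvergenceMax}.

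The main obstacle is not any single estimate but the bookkeeping in the middle step: one has to verify that the single pair of hypotheses imposed here ($\Phi$ satisfies Assumption \ref{PhiAss} and $\lambda\Phi(y)\geq\Phi(\lambda y)$ for $\lambda>1$) genuinely covers each of the four one\-sided lemmas, whose structural requirements on $\Phi$ and whose convergence hypotheses on the forcing terms are phrased rather differently, and that the envelopes $\underline f_n,\overline f_n$ simultaneously meet all the standing constraints (membership in $L^\infty_\nu(\Omega)$, containment in $[0,F]$, monotonicity, $L^\infty$\-convergence) that make $[0,A^{-1}F]$ a legitimate working interval for them.
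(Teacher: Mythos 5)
Your proposal is correct and follows essentially the same route as the paper: the paper squeezes $f_n$ between the monotone envelopes $\hat f_n=\inf_{m\geq n}f_m$ and $\check f_n=\sup_{m\geq n}f_m$ (rather than your $\max(f^*-\delta_n,\nu)$, $\min(f^*+\delta_n,F)$, which is the same device and avoids your implicit use of a pointwise minimum with $F\in V'$), applies Lemmas \ref{lemma:Decreasing}--\ref{lemma:IncreasingM} to these envelopes, and then passes to the limit by the order sandwich in $H$ together with $V$-boundedness to get weak $V$ convergence. Your explicit verification of cone normality and of how Assumption \ref{PhiAss} yields the hypotheses of the decreasing lemmas only makes explicit what the paper leaves implicit.
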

\begin{proof}
Define $\hat{f}_n:=\inf_{m\geq n} f_m$ and $\check{f}_n:=\sup_{m\geq n} f_m$, so that $0\leq\nu\leq \hat{f}_n\leq \hat{f}_{n+1}\leq F$, $F\geq \check{f}_n\geq \check{f}_{n+1}\geq\nu>0$ for all $n\in \mathbb{N}$, and also $\lim_{n\rightarrow \infty} \hat{f}_n=\lim_{n\rightarrow \infty} \check{f}_n=f^*$ in $L^\infty(\Omega)$. Since $0\leq \hat{f}_n\leq f_n\leq \check{f}_n\leq F$ and the map $H^+\ni y\mapsto S(f,\Phi(y))$ is increasing for any $f\in V'$, we have that $\mathbf{m}(\hat{f}_n), \mathbf{m}(f_n)$, $\mathbf{m}(\check{f}_n)$ and $\mathbf{m}(f^*)$ as well as $\mathbf{M}(\hat{f}_n), \mathbf{M}(f_n)$, $\mathbf{M}(\check{f}_n)$ and $\mathbf{M}(f^*)$ are well defined (note that $0\leq f^*\leq F$), respectively. Moreover, we have that 
\begin{equation*}
0\leq S(\hat{f}_n,\Phi(y))\leq S(f_n,\Phi(y))\leq S(\check{f}_n,\Phi(y))\leq \overline{y} \qquad \forall y\in[0, \overline{y}], n\in\mathbb{N}.
\end{equation*}
Hence from the inclusions \eqref{eq:IneqSets1} and \eqref{eq:IneqSets2}, we obtain {from \eqref{RnIneq} that} 
\begin{equation}\label{eq:MinIneq}
0\leq\mathbf{m}(\hat{f}_n)\leq \mathbf{m}(f_n)\leq \mathbf{m}(\check{f}_n)\leq \overline{y}, \qquad \forall n\in\mathbb{N},
\end{equation}
and from the inclusions \eqref{eq:IneqSets3} and \eqref{eq:IneqSets4} that
\begin{equation}\label{eq:MaxIneq}
0\leq\mathbf{M}(\hat{f}_n)\leq \mathbf{M}(f_n)\leq \mathbf{M}(\check{f}_n)\leq \overline{y}, \qquad \forall n\in\mathbb{N}.
\end{equation}

Then, by lemmas \ref{lemma:Decreasing}, \ref{lemma:Increasing}, \ref{lemma:DecreasingM} and \ref{lemma:IncreasingM} we have that $\mathbf{m}(\hat{f}_n)\rightarrow \mathbf{m}(f^*)$, $\mathbf{m}(\check{f}_n)\rightarrow \mathbf{m}(f^*)$, $\mathbf{M}(\hat{f}_n)\rightarrow \mathbf{M}(f^*)$ and $\mathbf{M}(\check{f}_n)\rightarrow \mathbf{M}(f^*)$, all in $V$ and $H$. Hence, we find
\begin{equation*}
\mathbf{m}(f_n)\rightarrow \mathbf{m}(f^*) \text { in } H\qquad \text{and} \qquad  \mathbf{M}(f_n)\to \mathbf{M}(f^*) \text { in } H
\end{equation*}
by \eqref{eq:MinIneq} and \eqref{eq:MaxIneq}. Since $\{\mathbf{m}(f_n)\}$ and $\{\mathbf{M}(f_n)\}$ are bounded in $V$, they are also weakly convergent (along a subsequence) to $\mathbf{m}(f^*)$ and $\mathbf{M}(f^*)$, respectively. However, since the entire sequences $\{\mathbf{m}(f_n)\}$ and $\{\mathbf{M}(f_n)\}$ strongly converge in $H$, it further follows that they converge weakly (not only along a subsequence) in $V$. Hence \eqref{eq:ConvergenceMin} and \eqref{eq:ConvergenceMax} hold true.
\end{proof}

With the aid of the previous theorem we can now formulate the result that proves the well-posedness of  \eqref{eq:CQVIr}.
We assume that   
\begin{equation}\label{eq:UadLoc}
U_{\mathrm{ad}}\subset \{f\in L_\nu^\infty(\Omega):  f\leq F\},
\end{equation}
for some $F\in V'$.  As in previous sections $\underline{y}=0$ and $\overline{y}=A^{-1}F$, so that $\mathbf{m}(f)$ and $\mathbf{M}(f)$ are defined as the minimal and maximal solutions, respectively, of the QVI in \eqref{eq:QVI}. Hence, the reduced version of \eqref{eq:CQVI} is given by
\begin{equation}\label{eq:CQVIr}\tag{$\tilde{\mathbb{P}}$}
\begin{split}
&\text{minimize }  J_1(\mathbf{m}(f),\mathbf{M}(f))+J_2(f),\\
&\text{subject to }  f\in U_{\text{ad}}.
\end{split}
\end{equation}
The well posedness of \eqref{eq:CQVIr} (and hence of \eqref{eq:CQVI}) is now shown in the following result.

\begin{thm}\label{thm:Existence}
Suppose that
\begin{enumerate}
\item[(i)] $J_1:V\times V\to \mathbb{R}$ is weakly lower semicontinuous,
\item[(ii)] $J_2: L^\infty(\Omega)\to \mathbb{R}$ is continuous, \quad { or}
\item[(ii')] { $J_2: U\to \mathbb{R}$ is coercive and  weakly lower semicontinuos} where { $U$ is a reflexive Banach space.}
\end{enumerate}
and both $J_1$ and $J_2$ are bounded from below. In addition suppose that {$U_{\mathrm{ad}}$ satisfies \eqref{eq:UadLoc}, is closed in $L^\infty(\Omega)$ (if \textit{(ii)} holds) or is weakly closed in $U$ (if \textit{(ii')} holds). Further, }for each $\alpha>0$ let the set 
\begin{equation*}
\{f\in U_{\mathrm{ad}}: J_2(f)\leq \alpha \}
\end{equation*}
be sequentially compact in $L^\infty(\Omega)$. {Additionally, assume that $\Phi$ satisfies Assumption \ref{PhiAss}, {and that $\lambda  \Phi(y)\geq \Phi(\lambda  y)$ for any $\lambda> 1$ and any $y\in  H^+$}}. Then, problem \eqref{eq:CQVIr}, and hence problem \eqref{eq:CQVI}, admits a solution.
\end{thm}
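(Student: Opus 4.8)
The plan is to apply the direct method of the calculus of variations to the reduced problem \eqref{eq:CQVIr}. Assuming, as is implicit, that $U_{\mathrm{ad}}\neq\emptyset$ (otherwise there is nothing to prove), the infimum $m$ of $J_1(\mathbf{m}(f),\mathbf{M}(f))+J_2(f)$ over $f\in U_{\mathrm{ad}}$ is finite, since $J_1$ and $J_2$ are real-valued and bounded from below. I would pick a minimizing sequence $\{f_n\}\subset U_{\mathrm{ad}}$. Using $J_1\geq c_1$ for some constant $c_1$, for all large $n$ one has $J_2(f_n)\leq m+1-c_1=:\alpha$, so the tail of $\{f_n\}$ lies in $\{f\in U_{\mathrm{ad}}:J_2(f)\leq\alpha\}$, which by hypothesis is sequentially compact in $L^\infty(\Omega)$. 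Hence, along a subsequence (not relabelled), $f_n\to f^*$ in $L^\infty(\Omega)$. Since each $f_n\in L^\infty_\nu(\Omega)$ and $f_n\leq F$, passing to the strong $L^\infty$-limit gives $\nu\leq f^*$ a.e.\ and $f^*\leq F$ in $V'$ (recall $L^\infty(\Omega)\hookrightarrow V'$ and the order cone is closed), so $f^*$ respects \eqref{eq:UadLoc} and in particular $\mathbf{m}(f^*)$, $\mathbf{M}(f^*)$ are well defined by Theorem \ref{thm:Tartar}.

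Next I would confirm that the limit is admissible, distinguishing the two cases. Under \textit{(ii)}, $U_{\mathrm{ad}}$ is closed in $L^\infty(\Omega)$, so $f^*\in U_{\mathrm{ad}}$ at once. Under \textit{(ii')}, coercivity of $J_2$ together with $J_2(f_n)\leq\alpha$ shows $\{f_n\}$ is bounded in the reflexive space $U$; passing to a further subsequence, $f_n\rightharpoonup f^*$ in $U$, where the weak-$U$ limit must agree with the strong-$L^\infty$ limit because both $U$ and $L^\infty(\Omega)$ embed continuously into $V'$ and weak limits in $V'$ are unique. Weak closedness of $U_{\mathrm{ad}}$ in $U$ then yields $f^*\in U_{\mathrm{ad}}$. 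In either case $f^*$ is feasible.

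With $f^*\in U_{\mathrm{ad}}$, $\{f_n\}\subset L^\infty_\nu(\Omega)$, $f_n\to f^*$ in $L^\infty(\Omega)$, $0\leq f_n\leq F$, and $\Phi$ satisfying Assumption \ref{PhiAss} and $\lambda\Phi(y)\geq\Phi(\lambda y)$ for $\lambda>1$, Theorem \ref{thm:StabilityMinMax} applies and yields
\begin{equation*}
\mathbf{m}(f_n)\rightharpoonup\mathbf{m}(f^*)\quad\text{and}\quad\mathbf{M}(f_n)\rightharpoonup\mathbf{M}(f^*)\quad\text{in }V.
\end{equation*}
Weak lower semicontinuity of $J_1$ on $V\times V$ gives $J_1(\mathbf{m}(f^*),\mathbf{M}(f^*))\leq\liminf_n J_1(\mathbf{m}(f_n),\mathbf{M}(f_n))$, while for $J_2$ we use continuity on $L^\infty(\Omega)$ in case \textit{(ii)} (hence $J_2(f_n)\to J_2(f^*)$) or weak lower semicontinuity on $U$ in case \textit{(ii')} (hence $J_2(f^*)\leq\liminf_n J_2(f_n)$). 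Adding the two inequalities gives $J_1(\mathbf{m}(f^*),\mathbf{M}(f^*))+J_2(f^*)\leq m$, so $f^*$ minimizes \eqref{eq:CQVIr}. Finally, since $T_{\sup}(\mathbf{Q}(f))=\mathbf{M}(f)$ and $T_{\inf}(\mathbf{Q}(f))=\mathbf{m}(f)$ and $\mathbf{O}$ is fully determined by $f$ via $\mathbf{O}=\mathbf{Q}(f)$, the pair $(\mathbf{Q}(f^*),f^*)$ solves \eqref{eq:CQVI}.

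I expect the main obstacle to be the bookkeeping in case \textit{(ii')}: one must run two compactness arguments in parallel — weak compactness in $U$ (to put the limit back into $U_{\mathrm{ad}}$ and to exploit weak lower semicontinuity of $J_2$) and strong sequential compactness in $L^\infty(\Omega)$ (which is exactly what Theorem \ref{thm:StabilityMinMax} takes as input) — and then argue that the two limits coincide. A secondary point that needs care is checking that the standing hypotheses of Section \ref{monotonic}, in particular $0\leq f_n\leq F$, are genuinely in force through \eqref{eq:UadLoc}, and that $f^*$ inherits the bounds $\nu\leq f^*\leq F$ so that the reduced functional is well defined at $f^*$.
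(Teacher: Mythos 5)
Your proposal is correct and takes essentially the same approach as the paper: the paper's proof is the one-line observation that, given Theorem \ref{thm:StabilityMinMax}, the result follows by the direct method of the calculus of variations, which is exactly the argument you carry out in detail (minimizing sequence, sequential compactness of the sublevel set in $L^\infty(\Omega)$, admissibility of the limit in both cases \textit{(ii)} and \textit{(ii')}, and lower semicontinuity of $J_1$ and $J_2$).
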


\begin{proof}
Given Theorem \ref{thm:StabilityMinMax}, the proof is just an application of the direct method of the calculus of variations.
\end{proof}

\subsection{Applications}

We finally return to the applications considered earlier in the paper.

\subsubsection{QVIs arising by coupling VIs and PDEs} We consider the problem class as described in section \ref{sec:VIsPDEs} and study conditions on $G, B,$ and $L$ to establish stability of minimum and maximum solutions to the QVI of interest. Recall that $\Phi$ in this setting is defined as $\Phi(y)=Lz(y)$ where $z(y)$ solves
\begin{align*}
\langle Bz+G(Lz,y)-g, w\rangle&= 0 &\forall w\in W,
\end{align*}
for $y\in H$.

\begin{proposition} Under the assumptions of section \ref{sec:VIsPDEs} suppose {either that $(a)$ } If $\lambda> 1$ and $v\in H^+$, then for all $z_1,z_2\in V$, it holds true that 
\begin{equation*}
(\lambda G(Lz_2,v)-G(Lz_1,\lambda v),(z_1-\lambda z_2)^+)\leq 0,
\end{equation*}
{or $(b)$} If $\lambda\in (0,1)$ and $v\in H^+$, then for all $z_1,z_2\in V$, it holds true that
\begin{equation*}
(G(Lz_2,\lambda v)-\lambda G(Lz_1,v),(\lambda z_1-z_2)^+)\leq 0.
\end{equation*}
Then, we have $\lambda  \Phi(v)\geq \Phi(\lambda  v)$ for all $\lambda> 1$ and $v\in H^+$.

\end{proposition}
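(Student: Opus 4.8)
The plan is to reduce the assertion $\lambda\Phi(v)\geq\Phi(\lambda v)$ to a one-sided comparison for the PDE solution operator $z(\cdot)$, and then to prove that comparison by subtracting suitably scaled copies of the weak formulation defining $z(\cdot)$ and testing with a positive part. For the reduction, write $L=L_0+L(0)$ with $L_0\colon W\to H$ linear and continuous and $L(0)\in H^+$, $L(0)\geq\nu$. For $\lambda>1$ one has $L(\lambda z)=\lambda L_0z+L(0)\leq\lambda L_0z+\lambda L(0)=\lambda Lz$; since $L$ is increasing, it therefore suffices to prove $z(\lambda v)\leq\lambda z(v)$ for all $\lambda>1$ and $v\in H^+$, because then $\Phi(\lambda v)=Lz(\lambda v)\leq L(\lambda z(v))\leq\lambda Lz(v)=\lambda\Phi(v)$.

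To obtain this comparison under hypothesis (a), fix $\lambda>1$, $v\in H^+$ and set $z_1:=z(\lambda v)$, $z_2:=z(v)$. Using linearity of $B$ (so that $\lambda Bz_2=B(\lambda z_2)$), subtract $\lambda$ times the equation defining $z_2$ from the equation defining $z_1$ and test the resulting identity with $e^+:=(z_1-\lambda z_2)^+\in W$; writing $e:=z_1-\lambda z_2$ this gives
\[
\langle Be,e^+\rangle+(G(Lz_1,\lambda v)-\lambda G(Lz_2,v),e^+)+(\lambda-1)(g,e^+)=0 .
\]
By T-monotonicity of $B$ one has $\langle Be^-,e^+\rangle=0$, so the first term equals $\langle Be^+,e^+\rangle$, which is $\geq\beta\|e^+\|_W^2\geq0$ by strong monotonicity of $B$ (with constant $\beta>0$); the last term is $\geq0$ since $\lambda>1$, $g\in H^+$ and $e^+\in H^+$; and the middle term is $\geq0$ by hypothesis (a) applied with these $z_1,z_2$ (which states $(\lambda G(Lz_2,v)-G(Lz_1,\lambda v),e^+)\leq0$). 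A sum of three nonnegative numbers vanishing forces $\|e^+\|_W=0$, i.e. $z(\lambda v)\leq\lambda z(v)$, which together with the reduction step proves the claim in case (a).

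For case (b) I would invoke Remark \ref{rem:equiv}: $\lambda\Phi(y)\geq\Phi(\lambda y)$ for all $\lambda>1$, $y\in H^+$ is equivalent to $\mu\Phi(v)\leq\Phi(\mu v)$ for all $\mu\in(0,1)$, $v\in H^+$, and the latter follows from the same computation with the roles of the two equations interchanged: with $z_1:=z(v)$, $z_2:=z(\mu v)$, subtract the equation defining $z_2$ from $\mu$ times the equation defining $z_1$ and test with $(\mu z_1-z_2)^+$; the $B$-term and the $(1-\mu)(g,\cdot)$-term are again $\geq0$, hypothesis (b) with $\lambda=\mu$ makes the $G$-term $\geq0$, hence $(\mu z_1-z_2)^+=0$ and $\Phi(\mu v)=Lz(\mu v)\geq L(\mu z(v))\geq\mu Lz(v)=\mu\Phi(v)$ (using $L(\mu z)=\mu L_0z+L(0)\geq\mu L_0z+\mu L(0)=\mu Lz$ for $\mu<1$). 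The only delicate point throughout is the admissibility of $(z_1-\lambda z_2)^+$ (resp. $(\mu z_1-z_2)^+$) as a test function in $W$ and the identity $\langle Bu,u^+\rangle=\langle Bu^+,u^+\rangle$; both rest on $W$ being stable under $u\mapsto u^\pm$, which is built into the standing assumption that $B$ is T-monotone on $W$. Everything else is routine manipulation of the weak formulations together with the one-sided structural hypothesis on $G$.
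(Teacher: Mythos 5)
Your proof is correct and follows essentially the same route as the paper: both compare $z(\lambda v)$ with $\lambda z(v)$ by subtracting the scaled weak formulations, testing with the positive part of the difference, and using $\langle Bw^-,w^+\rangle=0$, coercivity of $B$, the sign hypothesis on $G$, monotonicity of $L$, and Remark \ref{rem:equiv} for case (b). Your write-up is in fact slightly more careful than the paper's, since you keep track of the $(\lambda-1)(g,e^+)\geq 0$ term (which the paper silently absorbs into an equality) and make explicit the affine decomposition $L=L_0+L(0)$ needed to pass from $L(\lambda z(v))$ to $\lambda L z(v)$.
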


\begin{proof}
Let $w=z(\lambda v)-\lambda z(v)$ for $\lambda\geq 1$ and $v\in H^+$. Since $B$ is coercive and $\langle B w^-, w^+ \rangle \leq 0$ we observe that
\begin{align*}
c|w^+|_W^2\leq\langle Bw, w^+\rangle= (\lambda G(Lz(v),v)-G(Lz(\lambda v),\lambda v) ,w^+)\leq 0,
\end{align*}
i.e., $z(\lambda v)-\lambda z(v)\leq 0$, so that $Lz(\lambda v)-L(\lambda z(v))\leq 0$ given that $L$ preserves order. Hence, it follows that $\lambda  \Phi(v)\geq \Phi(\lambda  v)$.

Similarly, consider $w=\lambda z(v)-z(\lambda v)$ for $0<\lambda< 1$ and $v\in H^+$. Then,
\begin{align*}
c|w^+|_W^2\leq\langle Bw, w^+\rangle= (G(Lz(\lambda v)-\lambda G(Lz(v),v),\lambda v) ,w^+)\leq  0,
\end{align*}
i.e., $\lambda z(v)-z(\lambda v)\leq 0$, so that $\lambda L z(v)- Lz(\lambda v)\leq 0$ and hence $\lambda  \Phi(v)\leq \Phi(\lambda  v)$. {Thus, the result follows by the equivalence shown in Remark \ref{rem:equiv}}.
\end{proof}

Note that the problem given in Example \ref{ex1} satisfies the assumptions of the above proposition. Additionally, if the solution to $By=h$ satisfies $|y|_{H^2(\Omega)}\leq M|h|_{L^2(\Omega)}$ with $M$ independent of $h$, {and $L\in \mathscr{L}(H^2(\Omega))$} then for dimensions $N=1,2,3$ it is direct to infer that $\Phi:{L_+^2(\Omega)}\to L_\nu^\infty(\Omega)$ is completely continuous via Sobolev compact embeddings {since $\Phi(u)\in H^2(\Omega)$}; { see \cite[Rellich-Kondrachov Theorem, section 6.3]{MR2424078}}. {Note that the $H^2$ estimate does not necessarily require a smooth boundary: In fact, for a second order elliptic operator, a convex domain $\Omega$ is enough; see \cite[Theorem 3.2.1.2]{MR775683}}. Hence, all hypotheses of Theorem \ref{thm:StabilityMinMax} are met, and the minimum and maximum solutions are stable for perturbations of $f$ in $L^\infty(\Omega)$. Finally, if
\begin{equation*}
\{f\in U: 0<\nu \leq f\leq  F \:\text{and}\: \|f\|_U\leq \alpha \},
\end{equation*}
is sequentially compact in $L^\infty(\Omega)$ for each $\alpha>0$, we have that Problem \ref{eq:ControlVIPDE} has a solution. Further note that this last compactness assumption is satisfied for Example \ref{ex1}.

\subsubsection{The impulse control problems}

The previous can be directly applied to the impulse control problem in the bounded case. Let $\Omega=(0,1)$. Then, we have that $V=H^1(\Omega)$ compactly embeds into $C(\overline{\Omega})$, and hence it follows that for 
\begin{equation*}
(\Phi y)(x)=k+\mathrm{essinf}_{x+\xi \in \overline{\Omega}} (c_0(\xi)+y(x+\xi)),
\end{equation*}
with $k>0$ and $c_0$ continuous, we have that if $v_n\rightharpoonup v$ in $V$, then $\Phi(v_n)\rightarrow \Phi(v)$ in $C(\overline{\Omega})\subset L^\infty(\Omega)$. Hence, $\Phi$ satisfies Assumption \ref{PhiAss}. Furthermore, it follows that  $\lambda  \Phi(y)\geq \Phi(\lambda  y)$ for any $\lambda> 1$ and any $y\in H^+$.

Consider $U=H^1(\Omega)$ and $U_{\mathrm{ad}}:=\{f\in U: 0<\nu\leq f\leq F\}$ for  some $F\in H^1(\Omega)^*$, $J_1(a,b)=J_1(a)=\int_{\Omega}(s-a(x))^2\mathrm{d}x$ for some $s>0$, and $J_2(f):=\frac{\lambda}{2}|f|^2_{U}$. It follows that $\{f\in U_{\mathrm{ad}}: J_2(f)\leq \alpha \}$ is sequentially compact in $L^\infty(\Omega)$ for each $\alpha>0$ and that problem \eqref{eq:CQVIr} (which is the reduced version of problem \eqref{eq:Opt}) has a solution by Theorem \ref{thm:Existence}. {The application of \S \ref{sec:unboundedCase} can be treated mutatis mutandis.}

\section{Conclusion}

We have developed a theoretical framework for the study of optimal control problems with QVI constraints. Specifically, the reduced optimization problem of interest involves minimal and maximal points of the solution set to the QVI. The existence question reduces to the stability of two operators $\mathbf{m}$ and $\mathbf{M}$, that relate the solution set of the QVI to its minimal and maximal elements, respectively. Stability of such maps was developed for monotonic and non-monotonic perturbations, and we have applied such results to applications involving QVIs arising from impulse control problems and problems involving VIs coupled with nonlinear PDEs.

\bibliography{DatabaseBibliography}{}


\bibliographystyle{abbrv}
\end{document}